

\documentclass[12pt]{elsarticle}


 \usepackage{graphics}


\usepackage{amssymb}




\usepackage{amsfonts}
\usepackage{pifont}
\usepackage{amsmath}
\usepackage{latexsym}
\usepackage{bm}
\usepackage{amssymb}
\usepackage{amsthm}
\usepackage{graphics}
\usepackage{epstopdf}
\usepackage{tikz}
\def\bi{\begin{align}}
\def\bin{\begin{align*}}
\headsep 0.5 true cm \topmargin 0pt \oddsidemargin 0pt

\evensidemargin 0pt \textheight 23 true cm \textwidth 16.5 true cm
\newcommand{\ein}{\end{align*}}
\newcommand{\ei}{\end{align}}
\newcommand{\dif}{\mathrm{d}}

\newcommand{\sumli}{\sum\limits}
\newcommand{\ba}{\begin{array}}
\newcommand{\ea}{\end{array}}
\renewcommand{\vec}[1]{\bm{#1}}

\numberwithin{equation}{section} \numberwithin{figure}{section}
\numberwithin{table}{section}

\theoremstyle{definition}
\newtheorem{defi}{Definition}[section]
\theoremstyle{plain}

\theoremstyle{plain}
\newtheorem{thm}{Theorem}[section]

\newtheorem{rem}{Remark}[section]


\begin{document}

\begin{frontmatter}



\title{A note on continuous-stage Runge-Kutta methods}

\author{Wensheng Tang\corref{cor1}}
\ead{tangws@lsec.cc.ac.cn} \cortext[cor1]{Corresponding author.}
\address{College of Mathematics and Statistics,\\
    Changsha University of Science and Technology,\\ Changsha 410114, China}
\address{Hunan Provincial Key Laboratory of Mathematical\\
    Modeling and Analysis in Engineering,\\ Changsha 410114, China}
\author[]{}

\begin{abstract}

We provide a note on continuous-stage Runge-Kutta methods (csRK) for
solving initial value problems of first-order ordinary differential
equations. Such methods, as an interesting and creative extension of
traditional Runge-Kutta (RK) methods, can give us a new perspective
on RK discretization and it may enlarge the application of RK
approximation theory in modern mathematics and engineering fields. A
highlighted advantage of investigation of csRK methods is that we do
not need to study the tedious solution of multi-variable nonlinear
algebraic equations associated with order conditions. In this note,
we will discuss and promote the recently-developed csRK theory. In
particular, we will place emphasis on geometric integrators
including symplectic methods, symmetric methods and
energy-preserving methods which play a central role in the field of
geometric numerical integration.
\end{abstract}

\begin{keyword}
Continuous-stage Runge-Kutta methods; Hamiltonian systems;
Symplectic methods; Conjugate-symplectic methods; Energy-preserving
methods; Symmetric methods.

\end{keyword}

\end{frontmatter}


\section{Introduction}
\label{}



Since the pioneering work of Runge in 1895 \cite{runge95udn} and
Kutta in 1901 \cite{kutta01bzn}, Runge-Kutta (RK) methods have been
developed very well for over a hundred and twenty years
\cite{butcher96aho,butcher87tna,hairernw93sod,hairerw96sod}.
However, continuous-stage Runge-Kutta (csRK) methods, as an
interesting and creative extension of traditional RK methods, begin
entering people's horizons only in recent years. As far as we know,
the most original idea of such methods can be dated back to the
early work by Butcher in 1972 \cite{butcher72ato} (see also
\cite{butcher87tna}), in which RK methods were generalized by
allowing the schemes to be ``continuous" with ``infinitely many
stages". It is surprising that there was a very long period of
quiescence without any development. Until the year 2010, Hairer
pulled the idea back by exploiting it to explain and analyze
energy-preserving collocation methods he proposed in
\cite{hairer10epv}. Subsequently, Tang \& Sun
\cite{Tangs12tfe,Tangsc17dgm} found that some Galerkin
time-discretization methods for ordinary differential equations
(ODEs) can be equivalently transformed into csRK methods, which
implies that RK-type methods bear a close relationship to Galerkin
variational methods. Based on these previous studies, Tang \& Sun
further went deep into the discussion of constructive theory of csRK
methods in \cite{tangs12ana,Tangs14cor}, where orthogonal polynomial
expansion techniques combined with order theory were firstly
utilized. These studies show that an interesting and highlighted
advantage of considering csRK methods is that we do not need to
study the tedious solution of multi-variable nonlinear algebraic
equations associated with order conditions. More recently, Tang et
al have derived some extensions of csRK methods by using similar
techniques, see \cite{Tanglx16cos,Tangz18spc,Tangsz15hos}, in which
continuous-stage partitioned Runge-Kutta methods and
Runge-Kutta-Nystr\"{o}m methods are being proposed and investigated.
Miyatake \& Butcher \cite{miyatake14aep,miyatake15aco} investigate
an energy-preserving condition in terms of the coefficients of csRK
methods for solving Hamiltonian systems, and extend the theory of
exponentially-fitted RK methods in the context of csRK methods.
Besides, Li \& Wu \cite{liw16ffe} proposed functionally fitted
energy-preserving methods for oscillatory nonlinear Hamiltonian
systems and showed that they can be transformed into a class of csRK
methods.

It is well known that geometric numerical integration has become a
major thread in numerical mathematics since around 30 years ago
\cite{Feng95kfc,Fengqq10sga,hairerlw06gni,sanzc94nhp}. RK methods
are greatly developed in such a promising field since 1988
\cite{lasagni88crk,sanz88rkm,suris89ctg}. By introducing a
completely new framework, csRK methods opened up their own important
but distinctive (compared with the traditional RK methods) avenues
in the study of geometric numerical integration. For instance, some
recent literatures show that there exists csRK methods which are
structure-preserving including symplectic csRK methods
\cite{Tangs14cor,tangs12ana}, conjugate-symplectic (up to a finite
order) csRK methods \cite{hairerz13oco,Tangs14cor}, symmetric csRK
methods \cite{hairer10epv,Tangs14cor,tangs12ana}, energy-preserving
csRK methods \cite{brugnanoit10hbv,miyatake15aco,Celledoni09mmoqw,
hairer10epv,quispelm08anc,Tangs14cor,tangs12ana}. Particularly,
there are fruitful energy-preserving methods being proposed from
different approaches recently, e.g., energy-preserving trapezoidal
methods \cite{Iavernarop07sst}, average vector field method (AVFM)
(a kind of discrete gradient method) \cite{quispelm08anc},
Hamiltonian boundary value methods (HBVMs) \cite{brugnanoit10hbv},
continuous time finite element methods (TFEMs)
\cite{betschs00iec,chent07cfe,Tangs12tfe}, energy-preserving
collocation methods (EPCMs) \cite{hairer10epv}. However, all these
methods can be unified in the framework of csRK methods
\cite{Tangs12tfe}. In addition, csRK methods may promote the
investigation of energy-preserving methods which are conjugate
symplectic (up to a finite order) \cite{hairerz13oco,Tangs14cor}.

It is worth mentioning that some special-purpose algorithms are
impossible to exist in the classic context of RK methods but they
can be created fruitfully within the new framework. For example,
Celledoni et al \cite{Celledoni09mmoqw} proved that there exists no
energy-preserving RK methods for general non-polynomial Hamiltonian
systems, but energy-preserving csRK methods obviously exist
\cite{miyatake15aco,hairer10epv,quispelm08anc,Tangs14cor,tangs12ana}.
Furthermore, some numerical integrators can not be perfectly
explained in the classic RK framework, but they can be clearly
understood \cite{Tangs12tfe,Tangsc17dgm} with the help of csRK
methods (e.g., AVFM \cite{quispelm08anc}, $\infty$-HBVMs
\cite{brugnanoit10hbv}, EPCMs \cite{hairer10epv}, Galerkin TFEMs
\cite{betschs00iec,chent07cfe} etc). Hence, it seems that
continuous-stage methods provide us a new realm for numerical
solution of ODEs and it may produce new applications in various
fields especially in geometric numerical integration
\cite{tangs12ana,Tangs14cor,miyatake14aep,miyatake15aco,
Tanglx16cos,Tangz18spc,Tangsz15hos}.

This note is organized as follows. In Section 2, we contrive to
investigate the construction of csRK methods. Based on polynomial
expansion techniques, two effective ways to obtain csRK methods will
be introduced. Section 3 is devoted to discussing the geometric
numerical integration by csRK methods. Some algebraic conditions for
geometric integration are presented, and the ideas of designing
geometric integrators are sketched with the help of them. In the
final section, we give some concluding remarks to end this note.

\section{Construction of csRK methods}

For an initial value problem of first-order system in the form
\begin{equation}\label{eq:ode}
\dot{\vec{z}}=\vec{f}(t,\vec{z}),\quad \vec{z}(t_0)=\vec{z}_0\in
\mathbb{R}^d,
\end{equation}
we introduce the following definition of csRK methods.

\begin{defi}\cite{hairer10epv,Tangs14cor}\label{defi1}
Let $A_{\tau,\, \sigma}$ be a function of two variables $\tau$,
$\sigma$ $\in [0, 1]$, and $B_\tau$, $C_\tau$ be functions of
$\tau\in [0, 1]$. The one-step method $\Phi_h: \vec{z}_0 \mapsto
\vec{z}_{1}$ given by
\begin{equation}\label{crk}
\begin{split}
&\vec{Z}_\tau=\vec{z}_0+h\int_0^{1}A_{\tau,\,\sigma}\vec{f}(t_0+C_\sigma
h,\vec{Z}_\sigma)\,\dif
\sigma,\;\tau\in[0,\,1],\\
&\vec{z}_{1}=\vec{z}_0+h\int_0^{1}B_\tau\vec{f}(t_0+C_\tau
h,\vec{Z}_\tau)\,\dif\tau,
\end{split}
\end{equation}
is called a continuous-stage Runge-Kutta (csRK) method, where
$\vec{Z}_\tau\approx \vec{z}(t_0+C_\tau h).$ For the sake of
internal consistency, here we often assume that
\begin{equation}\label{consis}
C_\tau=\int_0^1A_{\tau,\,\sigma}\,\dif\sigma.
\end{equation}
A csRK method is of order $p$, if as $h\rightarrow0$, for all
sufficiently regular problem \eqref{eq:ode} its \emph{local error}
satisfies
\begin{equation*}
\vec{z}_{1}-\vec{z}(t_0+h)=\mathcal{O}(h^{p+1}).
\end{equation*}
\end{defi}
The uniqueness and existence of the solution of csRK schemes are
guaranteed by the following theorem.
\begin{thm}\cite{Tangs12tfe,miyatake15aco}\label{unex}
Assume $\vec{f}$ is Lipschitz continuous with constant $L$.  If step
size $h$ satisfies
$$h<\frac{1}{L\underset{\tau\in[0,1]}{max}\int_0^{1}|A_{\tau,\,\sigma}|\,\dif
\sigma},$$ then there exists a unique solution of \eqref{crk}.
\end{thm}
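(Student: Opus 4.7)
The plan is to cast the stage equation in \eqref{crk} as a fixed-point problem on a Banach space of continuous vector-valued functions on $[0,1]$ and invoke the Banach contraction mapping theorem. Specifically, I would work on the space $X = C([0,1],\mathbb{R}^d)$ equipped with the uniform norm $\|\vec{Z}\|_\infty = \max_{\tau\in[0,1]} \|\vec{Z}_\tau\|$, and define the operator
\begin{equation*}
(T\vec{Z})_\tau \;=\; \vec{z}_0 + h\int_0^1 A_{\tau,\sigma}\, \vec{f}(t_0+C_\sigma h,\vec{Z}_\sigma)\,\dif\sigma, \qquad \tau\in[0,1].
\end{equation*}
A solution to the stage equation in \eqref{crk} is exactly a fixed point of $T$.

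First I would verify that $T$ maps $X$ into itself, i.e., that $\tau\mapsto(T\vec{Z})_\tau$ is continuous; this follows from dominated convergence, using the Lipschitz continuity of $\vec{f}$ (which implies boundedness of the integrand on the compact set traced by $\vec{Z}$) together with the implicit regularity of $A_{\tau,\sigma}$ in $\tau$ that is tacitly assumed throughout the excerpt. Next, for any $\vec{Z},\vec{W}\in X$, I would estimate
\begin{equation*}
\|(T\vec{Z})_\tau - (T\vec{W})_\tau\| \;\le\; hL\int_0^1 |A_{\tau,\sigma}|\,\dif\sigma \;\cdot\; \|\vec{Z}-\vec{W}\|_\infty,
\end{equation*}
using the Lipschitz bound $\|\vec{f}(t,\vec{u})-\vec{f}(t,\vec{v})\|\le L\|\vec{u}-\vec{v}\|$. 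Taking the maximum over $\tau\in[0,1]$ yields
\begin{equation*}
\|T\vec{Z}-T\vec{W}\|_\infty \;\le\; hL\max_{\tau\in[0,1]}\int_0^1 |A_{\tau,\sigma}|\,\dif\sigma \;\cdot\; \|\vec{Z}-\vec{W}\|_\infty.
\end{equation*}

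By hypothesis, the constant $hL\max_\tau\int_0^1|A_{\tau,\sigma}|\dif\sigma$ is strictly less than $1$, so $T$ is a contraction on the complete metric space $X$. The Banach fixed-point theorem then yields a unique $\vec{Z}\in X$ with $T\vec{Z}=\vec{Z}$, which determines $\vec{z}_1$ uniquely via the quadrature formula in the second line of \eqref{crk}. The main technical point to handle carefully is the continuity of $T\vec{Z}$ in $\tau$ — everything else is a direct Lipschitz estimate — and this reduces to a mild regularity assumption on $A_{\tau,\sigma}$, which the definition of csRK methods implicitly requires for the integrals in \eqref{crk} and \eqref{consis} to be meaningful in the classical sense.
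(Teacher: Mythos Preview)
Your argument is correct and is exactly the standard contraction-mapping proof one would expect. Note, however, that the paper does not supply its own proof of this theorem: it simply states the result and attributes it to the references \cite{Tangs12tfe,miyatake15aco}. Your Banach fixed-point approach is precisely the argument used in those sources, so there is nothing to contrast. The only caveat worth flagging is the one you already identify: the well-definedness of $T$ on $C([0,1],\mathbb{R}^d)$ and the finiteness of $\max_{\tau}\int_0^1|A_{\tau,\sigma}|\,\dif\sigma$ rely on mild regularity of $A_{\tau,\sigma}$ (e.g.\ continuity, or at least that $\tau\mapsto\int_0^1|A_{\tau,\sigma}|\,\dif\sigma$ is bounded and $\tau\mapsto(T\vec{Z})_\tau$ is continuous), which the paper leaves implicit throughout.
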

\begin{thm}\label{prop-csrk}
Assume a csRK method with coefficients
$(A_{\tau,\sigma},\,B_\tau,\,C_\tau)$ satisfies the following two
conditions:
\begin{itemize}
  \item[\emph{(a)}] $\chi(\xi)=\int_0^{\xi}B_\tau\,\dif
  \tau,\;\xi\in[0,1]$ has an inverse function and $B_\tau$ is non-vanishing almost everywhere
   (e.g., if $B_\tau>0$ in $[0,1]$, then this condition is fulfilled);
  \item[\emph{(b)}] $\breve{B}(\rho)$ holds for some integer $\rho\geq1$, where\footnote{This
  condition is always fulfilled for a csRK method of order at least 1 (cf., simplifying
  conditions in \eqref{sim_assu}).}
  \begin{equation*} \breve{B}(\rho):\quad \int_0^1B_\tau
C_\tau^{\kappa-1}\,\dif \tau=\frac{1}{\kappa},\quad
\kappa=1,\ldots,\rho,
\end{equation*}
\end{itemize}
then the method can always be transformed into a new csRK method
with $B_\tau\equiv1,\,\tau\in[0,\,1]$.
\end{thm}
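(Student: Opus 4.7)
The plan is to absorb the weight $B_\tau$ into the differential by reparameterizing the stage variable through the cumulative distribution $\chi(\xi)=\int_0^\xi B_\tau\,\dif\tau$ itself. I would first check that $\chi$ is a bijection of $[0,1]$ onto itself: by construction $\chi(0)=0$, and specialising $\breve{B}(\rho)$ to $\kappa=1$ gives $\chi(1)=1$; combined with assumption (a) this makes $\chi$ strictly monotonic with a non-vanishing derivative $B_\tau$ almost everywhere, so the substitution $\hat\tau=\chi(\tau)$, $\dif\hat\tau=B_\tau\,\dif\tau$ is legitimate.

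Next I would execute this change of variables in both lines of \eqref{crk}. In the update step the Jacobian cancels the weight exactly, yielding
\begin{equation*}
\vec{z}_1 = \vec{z}_0 + h\int_0^1 \vec{f}\bigl(t_0+C_{\chi^{-1}(\hat\tau)}h,\; \vec{Z}_{\chi^{-1}(\hat\tau)}\bigr)\,\dif\hat\tau,
\end{equation*}
which is already a csRK update with $\hat B_{\hat\tau}\equiv 1$ once one defines $\hat{\vec{Z}}_{\hat\tau}:=\vec{Z}_{\chi^{-1}(\hat\tau)}$ and $\hat C_{\hat\tau}:=C_{\chi^{-1}(\hat\tau)}$. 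Applying the same substitution to both indices of the stage equation and using $\dif\sigma=\dif\hat\sigma/B_{\chi^{-1}(\hat\sigma)}$ reveals the corresponding stage kernel
\begin{equation*}
\hat A_{\hat\tau,\hat\sigma}:=\frac{A_{\chi^{-1}(\hat\tau),\,\chi^{-1}(\hat\sigma)}}{B_{\chi^{-1}(\hat\sigma)}}.
\end{equation*}

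To close the proof I would verify the internal-consistency relation \eqref{consis} for the new triple: reversing the substitution via $\dif\hat\sigma=B_\sigma\dif\sigma$ cancels the $B$-factor in the denominator and gives $\int_0^1 \hat A_{\hat\tau,\hat\sigma}\,\dif\hat\sigma=\int_0^1 A_{\chi^{-1}(\hat\tau),\sigma}\,\dif\sigma=C_{\chi^{-1}(\hat\tau)}=\hat C_{\hat\tau}$, so the reparameterized scheme is a bona fide csRK method. The only real subtlety, and therefore the main obstacle I anticipate, is ensuring that the quotient defining $\hat A$ is well posed. This is precisely the purpose of the theorem's hypotheses: (a) delivers the inverse $\chi^{-1}$, (b) with $\kappa=1$ normalises the range of $\chi$ to $[0,1]$ so that $\hat\sigma\in[0,1]$ is the right integration domain, and $B_\tau\neq 0$ a.e.\ guarantees the denominator is non-zero. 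Beyond this bookkeeping, the whole argument is a single change of variables and no deeper analytic ingredient is needed.
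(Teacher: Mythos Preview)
Your argument is correct and is precisely the change-of-variables proof that the paper has in mind: the paper does not spell out a proof here at all but simply refers to \cite{Tangs14cor}, Proposition~2.1, where the case $B_\tau>0$ is treated by exactly this reparameterization through $\chi$. One small remark you could add for completeness: conditions (a) and (b) together actually force $B_\tau>0$ a.e.\ anyway, since $\chi$ is continuous with $\chi(0)=0$, $\chi(1)=1$ and injective, hence strictly increasing, so the ``more general case'' alluded to in the paper collapses to the special case after this observation and your substitution $\dif\hat\tau=B_\tau\,\dif\tau$ needs no sign bookkeeping.
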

\begin{proof}
Note that the special case with $B_\tau>0$ has been given in
\cite{Tangs14cor}, and the more general case can be proved very
similarly (cf. \cite{Tangs14cor}, Proposition 2.1, page 181).
\end{proof}
In what follows, we attempt to construct csRK methods by using
orthogonal polynomial expansion techniques. For convenience, we
often assume $B_\tau\equiv1$ (in view of Theorem \ref{prop-csrk})
and let $C_\tau\equiv\tau$ for the following discussions. Firstly,
we need to introduce the following $\iota$-degree normalized shifted
Legendre polynomial $P_\iota(x)$ by using the Rodrigues' formula
$$P_0(x)=1,\;P_\iota(x)=\frac{\sqrt{2\iota+1}}{\iota!}
\frac{d^\iota}{dx^\iota}\Big(x^\iota(x-1)^\iota\Big),\;
\;\iota=1,2,3,\cdots.$$ They are orthogonal to each other with
respect to the $L^2$ inner product on $[0,\,1]$
$$\int_0^1 P_\iota(t) P_\kappa(t)\,\dif t= \delta_{\iota\kappa},
\quad\iota,\,\kappa=0,1,2,\cdots,$$ and satisfy the following
integration formulae \cite{hairerw96sod}
\begin{equation}\label{property}
\begin{split} &\int_0^xP_0(t)\,\dif
t=\xi_1P_1(x)+\frac{1}{2}P_0(x), \\
&\int_0^xP_\iota(t)\,\dif
t=\xi_{\iota+1}P_{\iota+1}(x)-\xi_{\iota}P_{\iota-1}(x),\quad
\iota=1,2,3,\cdots,\\
&\int_x^1P_\iota(t)\,\dif
t=\delta_{\iota0}-\int_0^{x}P_\iota(t)\,\dif t,\quad
\iota=0,1,2,\cdots,
\end{split}
\end{equation}
where $\xi_\iota:=\frac{1}{2\sqrt{4\iota^2-1}}$ and
$\delta_{\iota\kappa}$ is the Kronecker symbol.

Since $\{P_i(\tau)P_j(\sigma)\}_{i,j=0}^\infty$ forms a complete
orthogonal basis in function space $L^2([0,1]\times [0,1])$, we can
expand $A_{\tau,\,\sigma}$ as
\begin{equation}\label{expansion}
A_{\tau,\,\sigma}=\sum\limits_{0\leq i,j\in\mathbb{Z}}\alpha_{(i,j)}
P_i(\tau)P_j(\sigma),\quad\alpha_{(i,j)}\in \mathbb{R},
\end{equation}
where $\alpha_{(i,j)}$ are real parameters to be determined.

By substituting \eqref{expansion} into \eqref{consis}, we have
\begin{equation*}
C_\tau=\int_0^1A_{\tau,\,\sigma}\,\dif\sigma=\int_0^1(\sum\limits_{0\leq
i,j\in\mathbb{Z}}\alpha_{(i,j)}P_i(\tau)P_j(\sigma))d\sigma
=\sum\limits_{i\geq0}\alpha_{(i,0)} P_i(\tau).
\end{equation*}
Note that the first formula of \eqref{property} implies
\begin{equation}\label{tau_rew}
C_\tau\equiv\tau=\frac{1}{2}P_0(\tau)+\frac{\sqrt{3}}{6}P_1(\tau),
\end{equation}
then, by comparing the two formulae above with each other, we get
\begin{equation*}
\alpha_{(0,0)}=\frac{1}{2},\;\alpha_{(1,0)}=\frac{\sqrt{3}}{6},\;\alpha_{(i,0)}=0,\;i\geq2.
\end{equation*}

\subsection{Construction of csRK methods order by order}

Analogously to the traditional case of RK-type methods
\cite{hairerlw06gni}, by B-series theory we have the following order
conditions up to order 4 (under the condition \eqref{consis}):
\begin{equation*}
\begin{array}{|lllll|l|}
(1)  \int_0^1 B_\tau  d\tau=1; & & & & &(5) \int_0^1 B_\tau
C_\tau^3 d\tau=\frac{1}{4};\\[5pt]
(2) \int_0^1 B_\tau C_\tau  d\tau=\frac{1}{2};  & & & & &(6)
(\int_0^1)^2
B_\tau C_\tau C_{\sigma} A_{\tau,\sigma}d\tau d\sigma=\frac{1}{8}; \\[5pt]
(3) \int_0^1 B_\tau C_\tau^2 d\tau=\frac{1}{3}; & & & & &(7)
(\int_0^1)^2 B_\tau C_{\sigma}^2
A_{\tau,\sigma}d\tau d\sigma=\frac{1}{12}; \\[5pt]
(4) (\int_0^1)^2 B_\tau C_{\sigma}A_{\tau,\sigma}d\tau
d\sigma=\frac{1}{6}; & & & & & (8) (\int_0^1)^3 B_\tau C_{\rho}
A_{\tau,\sigma}A_{\sigma,\rho} d\tau d\sigma d\rho=\frac{1}{24}.
\end{array}
\end{equation*}

If the condition (1) holds, then the csRK method is of order 1; if
conditions (1)-(2) hold, then the csRK method is of order 2; if
conditions (1)-(4) hold, then the csRK method is of order 3; if
conditions (1)-(8) hold, then the csRK method is of order 4.

By hypothesis (i.e., $B_\tau\equiv1$ and $C_\tau\equiv\tau$),
conditions (1)-(3) and (5) are automatically satisfied. Therefore,
the remaining 4 conditions are to be considered. Our approach is to
substitute the expansion formula \eqref{expansion} into the
conditions one by one so as to get the requirements in terms of the
expansion coefficients. In the following the orthogonality of
Legendre polynomials and formula \eqref{tau_rew} will be used
several times.

For condition (4):
\begin{equation*}
\begin{split}
&\big(\int_0^1\big)^2B_\tau A_{\tau,\sigma}C_{\sigma}d\tau d\sigma
=\int_0^1\big(\int_0^1A_{\tau,\sigma}d\tau\big)\sigma
d\sigma\\
&=\int_0^1(\sum\limits_{j\geq0}\alpha_{(0,j)}
P_j(\sigma))\big(\frac{1}{2}P_0(\sigma)+\frac{\sqrt{3}}{6}P_1(\sigma)\big)
d\sigma=\frac{1}{2}\alpha_{(0,0)}+\frac{\sqrt{3}}{6}\alpha_{(0,1)}=\frac{1}{6},
\end{split}
\end{equation*}
which then gives $\alpha_{(0,1)}=-\frac{\sqrt{3}}{6}$.

For condition (6):
\begin{equation*}
\begin{split}
&(\int_0^1)^2 B_\tau C_\tau C_{\sigma} A_{\tau,\sigma}d\tau
d\sigma=\int_0^1(\int_0^1\tau A_{\tau,\sigma}d\tau)\sigma
d\sigma\\
&=\int_0^1\Big(\int_0^1(\frac{1}{2}P_0(\tau)+\frac{\sqrt{3}}{6}P_1(\tau))
(\sum\limits_{0\leq
i,j\in\mathbb{Z}}\alpha_{(i,j)}
P_i(\tau)P_j(\sigma))d\tau\Big)\sigma
d\sigma\\
&=\int_0^1\big(\frac{1}{2}\sum\limits_{j\geq0}\alpha_{(0,j)}P_j(\sigma)+
\frac{\sqrt{3}}{6}\sum\limits_{j\geq0}\alpha_{(1,j)}P_j(\sigma)\big)
(\frac{1}{2}P_0(\sigma)+\frac{\sqrt{3}}{6}P_1(\sigma))
d\sigma\\
&=\frac{1}{4}\alpha_{(0,0)}+\frac{\sqrt{3}}{12}\alpha_{(1,0)}+
\frac{\sqrt{3}}{12}\alpha_{(0,1)}+\frac{1}{12}\alpha_{(1,1)}=\frac{1}{8},
\end{split}
\end{equation*}
which then gives $\alpha_{(1,1)}=0$.

For condition (7):
\begin{equation*}
\begin{split}
&(\int_0^1)^2 B_\tau C_{\sigma}^2 A_{\tau,\sigma}d\tau
d\sigma=\int_0^1(\int_0^1 A_{\tau,\sigma}d\tau)\sigma^2
d\sigma\\
&=\int_0^1\big(\sum\limits_{j\geq0}\alpha_{(0,j)}
P_j(\sigma)\big)(\frac{1}{3}P_0(\sigma)+\frac{\sqrt{3}}{6}P_1(\sigma)
+\frac{\sqrt{5}}{30}P_2(\sigma))d\sigma\\
&=\frac{1}{3}\alpha_{(0,0)}+\frac{\sqrt{3}}{6}\alpha_{(0,1)}+
\frac{\sqrt{5}}{30}\alpha_{(0,2)}=\frac{1}{12},
\end{split}
\end{equation*}
which then gives $\alpha_{(0,2)}=0$. Here we used an identity
$$\sigma^2=2\int_0^{\sigma}(\int_0^xP_0(t)\,\dif
t)\,\dif x=\frac{1}{3}P_0(\sigma)+\frac{\sqrt{3}}{6}P_1(\sigma)
+\frac{\sqrt{5}}{30}P_2(\sigma)$$ which is deduced from
\eqref{property}.

For condition (8):
\begin{equation*}
\begin{split}
&(\int_0^1)^3 B_\tau C_{\rho} A_{\tau,\sigma}A_{\sigma,\rho} d\tau
d\sigma d\rho=\int_0^1(\int_0^1 A_{\tau,\sigma}d\tau)(\int_0^1 \rho
A_{\sigma,\rho}d\rho)d\sigma\\
&=\int_0^1\big(\sum\limits_{j\geq0}\alpha_{(0,j)}
P_j(\sigma)\big)\big(\frac{1}{2}\sum\limits_{i\geq0}\alpha_{(i,0)}P_i(\sigma)+
\frac{\sqrt{3}}{6}\sum\limits_{i\geq0}\alpha_{(i,1)}P_i(\sigma)\big)d\sigma\\
&=\frac{1}{2}\sum\limits_{i\geq0}\alpha_{(0,i)}\alpha_{(i,0)}+\frac{\sqrt{3}}{6}
\sum\limits_{i\geq0}\alpha_{(0,i)}\alpha_{(i,1)}=\frac{1}{24},
\end{split}
\end{equation*}
which then gives
$\frac{1}{2}\sum\limits_{i\geq2}\alpha_{(0,i)}\alpha_{(i,0)}+\frac{\sqrt{3}}{6}
\sum\limits_{i\geq2}\alpha_{(0,i)}\alpha_{(i,1)}=0$. Take into
account that $\alpha_{(0,2)}=0$ and $\alpha_{(i,0)}=0,\;i\geq2$, and
then it ends up with
$\sum\limits_{i\geq3}\alpha_{(0,i)}\alpha_{(i,1)}=0$.

\begin{thm}\label{orderbyorder}
Under the assumptions $B_\tau\equiv1$ and $C_\tau\equiv\tau$, the
csRK method \eqref{crk} with $A_{\tau, \sigma}$ given by
\begin{equation}\label{expansion2}
A_{\tau,\,\sigma}=\frac{1}{2}+\frac{\sqrt{3}}{6}P_1(\tau)+
\sum\limits_{i\geq0,j\geq1}\alpha_{(i,j)}
P_i(\tau)P_j(\sigma),\quad\alpha_{(i,j)}\in \mathbb{R},
\end{equation}
is of order $2$ at least. Moreover, if we additionally require
$\alpha_{(0,1)}=-\frac{\sqrt{3}}{6}$, then the method is of order
$3$ at least; if we require, additionally,
\begin{equation}\label{coeorder4m}
\alpha_{(1,1)}=0,\;\alpha_{(0,2)}=0,\;
\sum\limits_{i\geq3}\alpha_{(0,i)}\alpha_{(i,1)}=0,
\end{equation}
then the method is of order $4$ at least.
\end{thm}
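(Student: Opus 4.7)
The plan is to argue directly from the B-series order conditions (1)--(8) listed just above the theorem, using the standing assumptions $B_\tau\equiv1$ and $C_\tau\equiv\tau$. First I would observe that under these hypotheses, conditions (1), (2), (3) and (5) reduce to the trivial identities $\int_0^1\tau^{k-1}\,d\tau=1/k$ for $k=1,2,3,4$, and therefore hold automatically. In particular, order $2$ is already guaranteed by condition (2), which proves the first assertion of the theorem as soon as one checks that the expansion \eqref{expansion2} is consistent with the internal consistency requirement \eqref{consis}; but this is precisely how the coefficients $\alpha_{(0,0)}=1/2$, $\alpha_{(1,0)}=\sqrt{3}/6$, and $\alpha_{(i,0)}=0$ for $i\geq 2$ were fixed in the paragraph preceding the theorem, so \eqref{expansion2} automatically encodes \eqref{consis}.

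For the order $3$ claim I would plug the expansion \eqref{expansion2} into the only remaining order-$3$ condition, namely (4). The computation has already been carried out in the excerpt: exploiting $L^2$-orthogonality of the shifted Legendre polynomials on $[0,1]$ together with the Legendre representation \eqref{tau_rew} of $C_\tau$, condition (4) collapses to the single linear equation $\tfrac{1}{2}\alpha_{(0,0)}+\tfrac{\sqrt{3}}{6}\alpha_{(0,1)}=\tfrac{1}{6}$, which in view of $\alpha_{(0,0)}=1/2$ is equivalent to $\alpha_{(0,1)}=-\sqrt{3}/6$. This is exactly the additional hypothesis imposed in the theorem, so order $3$ follows.

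For the order $4$ claim I would handle conditions (6), (7), (8) in turn, relying on the same orthogonal projection idea: an inner integral such as $\int_0^1 A_{\tau,\sigma}\,d\tau$ (resp.\ $\int_0^1\tau A_{\tau,\sigma}\,d\tau$) is the Legendre expansion in $\sigma$ whose coefficients are exactly $\alpha_{(0,j)}$ (resp.\ $\tfrac12\alpha_{(0,j)}+\tfrac{\sqrt3}{6}\alpha_{(1,j)}$), and the outer integral against a power of $\sigma$ picks out only the first few Legendre modes of that power via \eqref{property}. The calculations are precisely the ones displayed in the excerpt for (6), (7), (8); they yield respectively $\alpha_{(1,1)}=0$, $\alpha_{(0,2)}=0$, and, after using the already-established vanishings $\alpha_{(0,2)}=0$ and $\alpha_{(i,0)}=0$ for $i\geq 2$, the nonlinear relation $\sum_{i\geq 3}\alpha_{(0,i)}\alpha_{(i,1)}=0$. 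Together these are exactly the hypothesis \eqref{coeorder4m}.

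The proof is therefore really an assembly proof: once the order conditions are listed and the expansion \eqref{expansion2} is fixed so as to satisfy \eqref{consis}, each higher-order condition reduces, via Legendre orthogonality, to a condition on a small number of coefficients $\alpha_{(i,j)}$. I do not anticipate any serious obstacle beyond bookkeeping; the main point that deserves care is checking that \eqref{expansion2} indeed forces $\alpha_{(0,0)}=1/2$, $\alpha_{(1,0)}=\sqrt{3}/6$, $\alpha_{(i,0)}=0$ for $i\geq 2$ (which is why the formula separates out the first two terms and then restricts the double sum to $j\geq 1$), so that one may legitimately appeal to conditions (1)--(3) and (5) being automatic. All remaining steps are direct Legendre-inner-product manipulations already carried out in the paragraphs immediately preceding the theorem.
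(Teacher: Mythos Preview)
Your proposal is correct and follows exactly the approach of the paper: the theorem is simply a summary of the computations displayed immediately before it, where the order conditions (1)--(8) are reduced, via Legendre orthogonality and the representation \eqref{tau_rew}, to constraints on the coefficients $\alpha_{(i,j)}$. There is nothing to add; the paper offers no separate proof environment for this theorem precisely because the argument is the preceding text.
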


\subsection{Construction of high-order csRK methods}

Although we can construct csRK methods of arbitrarily high order via
the above technique order by order, it is not an easy task to derive
higher order methods, seeing that the number of order conditions
will increase dramatically
\cite{hairerlw06gni,hairernw93sod,hairerw96sod} and one has to
conduct more tedious and complicated computation. To overcome these
difficulties, we have to use the following \emph{simplifying
assumptions} \cite{hairer10epv}
\begin{equation}\label{sim_assu}
\begin{split}
&\breve{B}(\rho):\quad \int_0^1B_\tau C_\tau^{\kappa-1}\,\dif
\tau=\frac{1}{\kappa},\quad \kappa=1,\ldots,\rho,\\
&\breve{C}(\eta):\quad
\int_0^1A_{\tau,\,\sigma}C_\sigma^{\kappa-1}\,\dif
\sigma=\frac{1}{\kappa}C_\tau^{\kappa},\quad \kappa=1,\ldots,\eta,\\
&\breve{D}(\zeta):\quad \int_0^1B_\tau C_\tau^{\kappa-1}
A_{\tau,\,\sigma}\,\dif
\tau=\frac{1}{\kappa}B_\sigma(1-C_\sigma^{\kappa}),\quad
\kappa=1,\ldots,\zeta.
\end{split}
\end{equation}
Actually, Tang \& Sun \cite{Tangs14cor} have investigated the
construction of high-order methods by using these \emph{simplifying
assumptions}. Now we give a brief review of some existing results.
The following result is completely similar to the classic result by
Butcher in 1964 \cite{butcher64ipa}.
\begin{thm}\cite{Tangs14cor}\label{crk:order}
If the coefficients $(A_{\tau,\,\sigma},\,B_\tau,\,C_\tau)$ of
method \eqref{crk} satisfy $\breve{B}(\rho)$, $\breve{C}(\alpha)$
and $\breve{D}(\beta)$, then the method is of order at least $\min
(\rho,2\alpha+2,\,\alpha+\beta+1)$.
\end{thm}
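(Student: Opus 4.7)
The plan is to mimic Butcher's classical 1964 argument for discrete RK methods, transcribing sums over stages into integrals over $[0,1]$. The starting point is the B-series machinery for csRK methods: the local error $\vec{z}_1 - \vec{z}(t_0+h)$ admits an expansion indexed by rooted trees $t$, and the method has order $p$ if and only if the elementary-weight condition $\Phi(t)=1/\gamma(t)$ holds for every rooted tree with $|t|\leq p$, where $\gamma(t)$ is the density of $t$ and $\Phi(t)$ is a nested integral of products of $A_{\tau,\sigma}$, $B_\tau$, $C_\tau$ encoded by the tree structure. Hence it suffices to verify these conditions for all $t$ with $|t|\leq p:=\min(\rho,\,2\alpha+2,\,\alpha+\beta+1)$.

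The main step is a structural induction on $t$. At any internal node whose hanging subtree has order $\kappa\leq\alpha$, the assumption $\breve{C}(\alpha)$ replaces the corresponding inner integral $\int_0^1 A_{\tau,\sigma} C_\sigma^{\kappa-1}\,\dif\sigma$ by the closed form $C_\tau^{\kappa}/\kappa$, effectively pruning that subtree and leaving a smaller tree whose elementary weight is the new integrand. At an edge adjacent to the root carrying a factor $B_\tau C_\tau^{\kappa-1}$ with $\kappa\leq\beta$, the assumption $\breve{D}(\beta)$ converts the outer integral into a $\frac{1}{\kappa}B_\sigma(1-C_\sigma^{\kappa})$ factor, detaching the edge and splitting the computation onto the strictly smaller subtrees. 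Iterating these two collapses reduces any admissible tree to a chain whose elementary-weight condition coincides with the moment identity $\breve{B}(\kappa)$ for some $\kappa\leq\rho$, which holds by hypothesis. The bounds $2\alpha+2$ and $\alpha+\beta+1$ arise by elementary bookkeeping: either the root has two branches each of order $\leq\alpha$ (total size $\leq 2\alpha+2$, both collapsible by $\breve{C}(\alpha)$), or one $\breve{D}(\beta)$ reduction at the root leaves a residual subtree of order $\leq\alpha$ reducible by $\breve{C}(\alpha)$ (total size $\leq\alpha+\beta+1$).

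I expect the main obstacle to be the combinatorial bookkeeping: for each rooted tree of order $\leq p$, one must select the order of collapses so that each invocation of $\breve{C}(\alpha)$ or $\breve{D}(\beta)$ stays within its allowed range $\kappa\leq\alpha$ or $\kappa\leq\beta$, and verify that every tree really does admit such a reduction sequence within the claimed bounds. The analytical content is otherwise routine, since all interchanges of iterated integrals are justified by the continuity hypotheses implicit in Theorem~\ref{unex}. Once the reduction scheme is formalized, the conclusion is parallel to Butcher's original proof and follows directly from $\breve{B}(\rho)$.
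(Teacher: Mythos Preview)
The paper does not prove this theorem; it is quoted from \cite{Tangs14cor}, with the observation that it is the continuous-stage analogue of Butcher's 1964 result \cite{butcher64ipa}. Your overall plan---verify the rooted-tree order conditions $\Phi(t)=1/\gamma(t)$ for $|t|\le p$ by repeatedly collapsing subtrees via $\breve{C}(\alpha)$ and pruning at the root via $\breve{D}(\beta)$ until only bushy-tree conditions $\breve{B}(\cdot)$ remain---is the correct one and is precisely what the cited reference does.

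Where your sketch goes wrong is the accounting for the bounds $2\alpha+2$ and $\alpha+\beta+1$. It is not that ``the root has two branches each of order $\le\alpha$'' (that would give total order $\le 2\alpha+1$, not $2\alpha+2$, and in any case a tree of order $\le 2\alpha+2$ can have arbitrary shape). The correct reasoning runs in the opposite direction. After all possible $\breve{C}(\alpha)$-collapses, every surviving non-leaf subtree hanging from the root has order $\ge\alpha+1$; if two such subtrees survived, the tree would have order at least $1+2(\alpha+1)=2\alpha+3$, contradicting $|t|\le 2\alpha+2$. Hence at most one uncollapsed branch $u$ remains at the root, and the number of leaf siblings there is $|t|-1-|u|\le(\alpha+\beta+1)-1-(\alpha+1)=\beta-1$, so $\breve{D}(\beta)$ applies with $\kappa\le\beta$. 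Moreover, a single $D$-step does \emph{not} leave ``a residual subtree of order $\le\alpha$'': it produces two new trees, one of strictly smaller order and one of the same order but with the old root deleted (hence smaller height), and you must re-enter the whole $C$/$D$ loop on each. A clean induction therefore requires a well-founded measure such as $(\text{order},\text{height})$ in lexicographic order---exactly the bookkeeping burden you flag in your final paragraph. With the bound derivation corrected as above, the argument is standard and goes through.
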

\begin{thm}\cite{Tangs14cor} \label{mainthm}
For a csRK method with $B_\tau\equiv1$ and $C_\tau=\tau$ (then
$\breve{B}(\infty)$ holds), the following two statements are
equivalent to each other:\emph{(a)} Both $\breve{C}(\alpha)$ and
$\breve{D}(\beta)$ hold; \emph{(b)} The coefficient
$A_{\tau,\,\sigma}$ has the following form in terms of Legendre
polynomials
\begin{equation}\label{coef}
A_{\tau,\,\sigma}=\frac{1}{2}+\sum_{\iota=0}^{N_1}\xi_{\iota+1}
P_{\iota+1}(\tau)P_\iota(\sigma)-\sum_{\iota=0}^{N_2}\xi_{\iota+1}
P_{\iota+1}(\sigma)P_\iota(\tau)+\sum_{i\geq\beta,\,
j\geq\alpha}\alpha_{(i,j)}P_i(\tau)P_j(\sigma),
\end{equation}
where $N_1=\max(\alpha-1,\,\beta-2)$,
$N_2=\max(\alpha-2,\,\beta-1)$,
$\xi_\iota=\frac{1}{2\sqrt{4\iota^2-1}}$ and $\alpha_{(i,j)}$ are
any real parameters.
\end{thm}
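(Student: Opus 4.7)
\vspace{1em}
\noindent\textbf{Proof sketch.}
The plan is to recast the simplifying assumptions as identities against polynomial test functions and then match coefficients in the tensor-product Legendre expansion $A_{\tau,\sigma}=\sum_{i,j\geq 0}\alpha_{(i,j)}P_i(\tau)P_j(\sigma)$. First, since $\{\sigma^{\kappa-1}\}_{\kappa=1}^{\alpha}$ spans the polynomials of degree $\leq \alpha-1$, condition $\breve{C}(\alpha)$ is equivalent to
\[
\int_0^1 A_{\tau,\sigma}\,Q(\sigma)\,\dif\sigma=\int_0^{\tau}Q(\sigma)\,\dif\sigma
\]
for every polynomial $Q$ of degree $\leq \alpha-1$, and in particular for each $Q=P_k$, $k=0,\dots,\alpha-1$. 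An analogous reduction, using $\int_\sigma^1 Q(\tau)\,\dif\tau$ on the right, converts $\breve{D}(\beta)$ into the assertion that $\int_0^1 P_k(\tau)A_{\tau,\sigma}\,\dif\tau=\int_\sigma^1 P_k(\tau)\,\dif\tau$ for $k=0,\dots,\beta-1$.

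Next, substitute the Legendre expansion and invoke orthogonality on $[0,1]$. The test $\breve{C}(\alpha)$ collapses to $\sum_{i\geq0}\alpha_{(i,k)}P_i(\tau)=\int_0^\tau P_k(\sigma)\,\dif\sigma$ for $k=0,\dots,\alpha-1$. By the integration formulas~\eqref{property}, the right-hand side equals $\tfrac12 P_0(\tau)+\xi_1 P_1(\tau)$ when $k=0$ and $\xi_{k+1}P_{k+1}(\tau)-\xi_k P_{k-1}(\tau)$ when $k\geq 1$, so all coefficients $\alpha_{(i,j)}$ with $j\leq\alpha-1$ are pinned down: only $\alpha_{(0,0)}=\tfrac12$, $\alpha_{(k+1,k)}=\xi_{k+1}$ (for $0\leq k\leq\alpha-1$), and $\alpha_{(k-1,k)}=-\xi_k$ (for $1\leq k\leq\alpha-1$) survive. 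The same procedure applied to $\breve{D}(\beta)$, together with $\int_\sigma^1 P_k(\tau)\,\dif\tau=\delta_{k0}-\int_0^\sigma P_k(\tau)\,\dif\tau$, determines all $\alpha_{(i,j)}$ with $i\leq\beta-1$: the survivors are $\alpha_{(0,0)}=\tfrac12$, $\alpha_{(k,k+1)}=-\xi_{k+1}$ (for $0\leq k\leq\beta-1$), and $\alpha_{(k,k-1)}=\xi_k$ (for $1\leq k\leq\beta-1$).

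To finish (a)$\Rightarrow$(b), I verify consistency on the overlap region $\{i\leq\beta-1,\,j\leq\alpha-1\}$: both determinations assign $\xi_\iota$ to $\alpha_{(\iota,\iota-1)}$ and $-\xi_\iota$ to $\alpha_{(\iota-1,\iota)}$, so there is no conflict. Taking the union of the two sets of nonzero indices, the ``super-diagonal'' $\alpha_{(\iota+1,\iota)}=\xi_{\iota+1}$ runs over $\iota=0,\dots,\max(\alpha-1,\beta-2)=N_1$, and the ``sub-diagonal'' $\alpha_{(\iota,\iota+1)}=-\xi_{\iota+1}$ runs over $\iota=0,\dots,\max(\alpha-2,\beta-1)=N_2$; the remaining coefficients are free and necessarily lie in the region $i\geq\beta$ and $j\geq\alpha$, yielding exactly \eqref{coef}. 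For (b)$\Rightarrow$(a), substitute the stated form into $\breve{C}(\alpha)$ and $\breve{D}(\beta)$: the free tail integrates away (orthogonality kills it against $\sigma^{\kappa-1}$ for $\kappa\leq\alpha$ and against $\tau^{\kappa-1}$ for $\kappa\leq\beta$), and the three-term pieces telescope via~\eqref{property} and~\eqref{tau_rew} to reproduce $\tfrac{\tau^\kappa}{\kappa}$ and $\tfrac{1-\sigma^\kappa}{\kappa}$ respectively.

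The main obstacle is the bookkeeping in the overlap region in the last step: keeping track of which of $\breve{C}(\alpha)$ or $\breve{D}(\beta)$ contributes the stronger constraint along the super- and sub-diagonals is precisely what produces the asymmetric cutoffs $N_1=\max(\alpha-1,\beta-2)$ and $N_2=\max(\alpha-2,\beta-1)$, and one must check that no hidden compatibility condition is imposed on the truncated tail.
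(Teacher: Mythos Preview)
Your argument is correct and is exactly the natural proof: rewrite $\breve{C}(\alpha)$ and $\breve{D}(\beta)$ as identities against the Legendre test functions $P_0,\dots,P_{\alpha-1}$ (resp.\ $P_0,\dots,P_{\beta-1}$), read off the columns (resp.\ rows) of the coefficient array $(\alpha_{(i,j)})$ via orthogonality and the integration formulas~\eqref{property}, verify consistency on the overlap, and collect the super/sub-diagonal entries to obtain the cutoffs $N_1=\max(\alpha-1,\beta-2)$ and $N_2=\max(\alpha-2,\beta-1)$.

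Note, however, that this paper does \emph{not} supply its own proof of the theorem; it is quoted from \cite{Tangs14cor}. Your approach is precisely the Legendre-expansion technique that the present paper uses elsewhere (e.g.\ in Section~2.1 and in the proofs of Theorems~\ref{symthm} and~\ref{symmthm}), and it is the same method employed in the original source. So there is nothing to contrast: your sketch is the intended argument, carried out cleanly.
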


By combining Theorem \ref{crk:order} with Theorem \ref{mainthm} we
can easily construct csRK methods of arbitrarily high order, the
order of which are given by $\min
(\infty,\,2\alpha+2,\,\alpha+\beta+1)=\min
(2\alpha+2,\,\alpha+\beta+1)$. For example, if we take
$\alpha=2,\,\beta=1$ in Theorem \ref{mainthm}, then we get a family
of 4-order methods which can be retrieved by taking
\begin{equation*}
\alpha_{(0,i)}=\alpha_{(i,1)}=0,\;i\geq3,\;\alpha_{(2,1)}=\xi_2=\frac{\sqrt{15}}{30},
\end{equation*}
in Theorem \ref{orderbyorder}. Note that methods constructed by
Theorem \ref{orderbyorder} cover all the methods given by Theorem
\ref{mainthm} (as we construct methods up to order 4). This implies
that we will lose the opportunity to discover many other new csRK
methods by using Theorem \ref{mainthm}, even though it is much
easier to construct high-order csRK methods compared with the
approach shown in subsection 2.1.

To derive a practical csRK method, we need to get a finite form of
$A_{\tau,\,\sigma}$ by truncating the series \eqref{coef}. In such a
case, without loss a generality, we assume $A_{\tau,\,\sigma}$ is a
bivariate polynomial of degree $\pi_A^{\tau}$ in $\tau$ and degree
$\pi_A^{\sigma}$ in $\sigma$. Applying a quadrature formula
$(b_i,c_i)_{i=1}^s (0\leq c_i\leq1)$ to (\ref{crk}), we derive an
$s$-stage RK method
\begin{equation}\label{crk:quad}
\begin{split}
&\widetilde{\vec{Z}}_i=\vec{z}_0+h\sum_{j=1}^sb_jA_{c_i,\,c_j}\vec{f}(t_0+c_{j}h,
\widetilde{\vec{Z}}_j),\quad i=1,\cdots,s,\\
&\vec{z}_{1}=\vec{z}_0+h\sum_{i=1}^sb_{i}B_{c_i}\vec{f}(t_0+c_{i}h,\widetilde{\vec{Z}}_i),
\end{split}
\end{equation}
where $\widetilde{\vec{Z}}_i\approx\vec{Z}_{c_i}$.
\begin{thm}\cite{Tanglx16cos}\label{qua:csRK}
Assume $A_{\tau,\,\sigma}$ is a bivariate polynomial of degree
$\pi_A^{\tau}$ in $\tau$ and degree $\pi_A^{\sigma}$ in $\sigma$,
and the quadrature formula $(b_i,c_i)_{i=1}^s$ is of
order\footnote{The quadrature formula is of order $p$ iff $\int_0^1
f(x)\, \dif x=\sumli_{i=1}^s b_i f(c_i)$ holds for any polynomial
$f(x)$ of degree up to $p-1$.} $p$. If a csRK method \eqref{crk}
with coefficients $(A_{\tau,\,\sigma},\,B_\tau,\,C_\tau)$ satisfies
$B_\tau\equiv1 ,\,C_\tau=\tau$ (then $\breve{B}(\infty)$ holds) and
both $\breve{C}(\eta)$, $\breve{D}(\zeta)$ hold, then the classic RK
method \eqref{crk:quad} with coefficients
$(b_{j}A_{c_i,c_j},b_i,\,c_i)$ is of order at least
$$\min(p, 2\alpha+2, \alpha+\beta+1),$$
where $\alpha=\min(\eta, p-\pi_A^{\sigma})$ and $\beta=\min(\zeta,
p-\pi_A^{\tau})$.
\end{thm}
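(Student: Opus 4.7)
The plan is to reduce Theorem \ref{qua:csRK} to the classical Butcher order theorem by showing that the simplifying assumptions $B(p)$, $C(\alpha)$, $D(\beta)$ hold for the discretized RK scheme \eqref{crk:quad} with coefficients $\bar{a}_{ij}:=b_j A_{c_i,c_j}$, $\bar{b}_i:=b_i B_{c_i}=b_i$, $\bar{c}_i:=c_i$. Since the classical result of Butcher guarantees order at least $\min(p,2\alpha+2,\alpha+\beta+1)$ under $B(p)\wedge C(\alpha)\wedge D(\beta)$, the theorem follows once each of the three assumptions has been established with the stated indices.

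The first step is trivial: $B(p)$ is exactly the statement that $(b_i,c_i)_{i=1}^s$ has quadrature order $p$, which is the hypothesis. The second step handles $C(\alpha)$. I would start from the continuous simplifying assumption $\breve{C}(\eta)$, namely $\int_0^1 A_{\tau,\sigma}\sigma^{\kappa-1}\,\dif\sigma=\tau^\kappa/\kappa$, specialize $\tau=c_i$, and observe that the integrand is a polynomial of degree $\pi^\sigma+\kappa-1$ in $\sigma$. As long as $\pi^\sigma+\kappa-1\le p-1$, the quadrature rule is exact, hence
\begin{equation*}
\sum_{j=1}^s b_j A_{c_i,c_j}c_j^{\kappa-1}=\int_0^1 A_{c_i,\sigma}\sigma^{\kappa-1}\,\dif\sigma=\frac{c_i^{\kappa}}{\kappa},
\end{equation*}
which is precisely the discrete $C(\alpha)$ for $\kappa\le\alpha=\min(\eta,p-\pi^\sigma)$. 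The third step treats $D(\beta)$ analogously: $\breve{D}(\zeta)$ with $B_\tau\equiv1$, $C_\tau=\tau$ reads $\int_0^1\tau^{\kappa-1}A_{\tau,\sigma}\,\dif\tau=(1-\sigma^\kappa)/\kappa$. Setting $\sigma=c_j$ and noting the integrand is a polynomial of degree $\pi^\tau+\kappa-1$ in $\tau$, exactness of the quadrature requires $\kappa\le p-\pi^\tau$, so for $\kappa\le\beta=\min(\zeta,p-\pi^\tau)$ one obtains
\begin{equation*}
\sum_{i=1}^s b_i c_i^{\kappa-1}A_{c_i,c_j}=\frac{1-c_j^{\kappa}}{\kappa},
\end{equation*}
which multiplied by $b_j$ yields the standard $D(\beta)$ condition $\sum_i \bar{b}_i\bar{c}_i^{\kappa-1}\bar{a}_{ij}=\bar{b}_j(1-\bar{c}_j^\kappa)/\kappa$.

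With $B(p)$, $C(\alpha)$, $D(\beta)$ in hand, a direct invocation of the classical Butcher order theorem (1964) delivers the claimed order bound $\min(p,2\alpha+2,\alpha+\beta+1)$, completing the proof. I do not expect any serious obstacle: the only subtlety is getting the degree count right in the quadrature-exactness argument, namely that $A_{c_i,\sigma}\sigma^{\kappa-1}$ (respectively $\tau^{\kappa-1}A_{\tau,c_j}$) is genuinely of degree $\pi^\sigma+\kappa-1$ (respectively $\pi^\tau+\kappa-1$), so that the inequality $\kappa\le p-\pi^\sigma$ (respectively $\kappa\le p-\pi^\tau$) is both necessary and sufficient for the continuous-to-discrete transfer. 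Everything else is bookkeeping.
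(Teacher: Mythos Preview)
Your proposal is correct and follows the natural route: verify the classical simplifying assumptions $B(p)$, $C(\alpha)$, $D(\beta)$ for the discretized scheme by combining the continuous assumptions $\breve{C}(\eta)$, $\breve{D}(\zeta)$ with quadrature exactness (the degree count you give is right), then invoke Butcher's 1964 theorem. The present paper does not supply its own proof of this statement---it is quoted from \cite{Tanglx16cos}---so there is nothing further to compare against; your argument is precisely the expected one.
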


Theorem \ref{qua:csRK} tells us how to construct a traditional RK
scheme based on csRK methods. The most highlighted advantage of such
approach to construct RK-type methods is that we do not need to
consider and study the tedious solution of nonlinear algebraic
equations deduced from order conditions. It turns out that this
approach \cite{tangs12ana,Tangs14cor} is comparable to the
W-transformation technique proposed by Hairer \& Wanner
\cite{hairerw96sod}.

\section{Geometric numerical integration by csRK methods}

In this section, we mainly focus on the geometric numerical
integration of Hamiltonian problem
\begin{equation}\label{Hs}
\dot{\vec{z}}=J^{-1}\nabla H(\vec{z}), \quad
\vec{z}(t_0)=\vec{z}_0\in \mathbb{R}^{2d},
\end{equation}
where $J=\begin{pmatrix}\vec{0}&I_d\\-I_d&\vec{0}\end{pmatrix}$
(with $I_d$ the $d\times d$ identity matrix) is a standard structure
matrix,  $H:\mathbb{R}^{2d}\rightarrow\mathbb{R}$ is the Hamiltonian
function which generally represents the total energy of the given
system. The system \eqref{Hs} has two main geometric properties
\cite{Arnold89mmo}:
\begin{itemize}
  \item[(a)] Energy preservation:\;
$H(\vec{z}(t))\equiv H(\vec{z}(t_0))$ for $\forall t$;
  \item[(b)] Symplecticity (Poincar\'{e} 1899):\;
$\dif\vec{z}(t)\wedge J\dif\vec{z}(t)=\dif\vec{z}(t_{0})\wedge
J\dif\vec{z}(t_{0})$ for $\forall t$.
\end{itemize}
It is known that property (b) is a characteristic property for
Hamiltonian systems (see \cite{hairerlw06gni}, Theorem 2.6, page
185) and it essentially implies (a). A well-known negative result
given by Ge \& Marsden \cite{gem88lph} manifests that, generally, we
can not have a numerical method which exactly preserves both
properties at the same time\footnote{For linear Hamiltonian systems,
there exists numerical methods which exactly preserve energy and
symplecticity simultaneously, e.g., symplectic RK methods can
preserve all quadratic invariants including the quadratic
Hamiltonian \cite{hairerlw06gni}.}. It has been evidenced that
symplectic methods possess a nearly energy-preserving property
(exactly preserve a modified Hamiltonian) for long-term computation
\cite{hairerlw06gni}, while energy-preserving methods will loss the
symplecticity in general---It may possibly leads to incorrect phase
space behavior. Particularly, when symplectic methods are applied to
integrable and near-integrable systems, they produce excellent
numerical behaviors: linear error growth, long-time
near-conservation of first integrals, existence of invariant tori
\cite{hairerlw06gni,Shang99kam}. For these reasons, symplectic
methods have been drawn more attentions in geometric integration.
However, energy-preserving methods are also of interest in many
fields, e.g., molecular dynamics, plasma physics etc
\cite{Fengqq10sga,hairerlw06gni,sanzc94nhp}. An interesting result
is that there exists an energy-preserving B-series integrator which
is conjugate to a symplectic method \cite{Chartierfm06aaa}, but it
remains a challenge to construct a computational method owning such
a symplectic-like property \cite{hairerz13oco}. Besides, symmetric
methods are popular for solving many time-reversible problems
arising in various fields, and they share many similar excellent
long-time properties with symplectic methods especially when they
are applied to (near-)integrable systems \cite{hairerlw06gni}. In
general, energy-preserving methods for time-reversible Hamiltonian
system are often expected to be symmetric.

\subsection{Symplectic csRK methods}

In this part, we will firstly study the condition for csRK methods
to be symplectic, and then discuss the construction of symplectic
methods.

\begin{thm}\label{symcond1}
If the coefficients of a csRK method \eqref{crk} satisfy
\begin{equation}\label{symcond}
B_\tau A_{\tau,\sigma}+B_\sigma A_{\sigma,\tau}\equiv B_\tau
B_\sigma,\;\; \tau,\,\sigma \in [0,1],
\end{equation}
then it is symplectic.
\end{thm}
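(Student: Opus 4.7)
The plan is to adapt the classical Lasagni/Sanz-Serna/Suris symplecticity argument to the continuous-stage setting. Symplecticity of the one-step map $\Phi_h$ amounts to $U_1^{\top} J V_1 = U^{\top} J V$ for every pair of tangent perturbations $U, V$ of the initial datum $\vec{z}_0$, with $U_1, V_1$ their propagated images. Differentiating \eqref{crk} with respect to $\vec{z}_0$ gives the variational system
\begin{equation*}
U_\tau = U + h \int_0^1 A_{\tau,\sigma} K_\sigma U_\sigma \,\dif\sigma, \qquad U_1 = U + h \int_0^1 B_\tau K_\tau U_\tau \,\dif\tau,
\end{equation*}
and analogously for $V$, where $K_\tau := \vec{f}'(\vec{Z}_\tau) = J^{-1} \nabla^2 H(\vec{Z}_\tau)$. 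The structural fact that drives the computation is that $J K_\tau = \nabla^2 H(\vec{Z}_\tau)$ is symmetric, equivalently $K_\tau^{\top} J + J K_\tau = 0$.

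First I would expand $U_1^{\top} J V_1 - U^{\top} J V$ into two cross terms linear in $h$ and one double integral quadratic in $h$. The decisive step is then to eliminate the two free occurrences of $U$ and $V$ in the linear-in-$h$ terms via the stage identities $U = U_\tau - h \int_0^1 A_{\tau,\sigma} K_\sigma U_\sigma \,\dif\sigma$ and its counterpart for $V$. Using $K_\tau^{\top} J = -J K_\tau$, the purely $\mathcal{O}(h)$ residues produced by this substitution cancel exactly (they are the same single integral with opposite signs), while the newly generated $\mathcal{O}(h^2)$ contributions combine with the original quadratic term. After a relabeling $\tau \leftrightarrow \sigma$ of the dummy variables in one of the two integrals, all quadratic contributions regroup into the single expression
\begin{equation*}
h^2 \int_0^1 \!\!\int_0^1 \bigl( B_\tau A_{\tau,\sigma} + B_\sigma A_{\sigma,\tau} - B_\tau B_\sigma \bigr)\, U_\tau^{\top} (J K_\tau) K_\sigma V_\sigma \,\dif\tau\,\dif\sigma.
\end{equation*}
The hypothesis \eqref{symcond} forces the scalar kernel in parentheses to vanish identically on $[0,1]^2$, whence $U_1^{\top} J V_1 = U^{\top} J V$ for arbitrary $U, V$, which is precisely the symplecticity of $\Phi_h$.

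The main obstacle I anticipate is the careful sign bookkeeping when commuting $J$ past the various factors of $K_\tau$, and in particular confirming that the two $\mathcal{O}(h)$ residues really cancel rather than reinforce. Everything else is a mechanical continuous analogue of the classical identity $b_i a_{ij} + b_j a_{ji} = b_i b_j$ producing cancellation in the finite-stage case: sums over stage indices are replaced by double integrals over $\tau, \sigma \in [0,1]$, and no genuinely new ideas are required once the algebraic skeleton of the finite-stage proof has been transcribed to integrals.
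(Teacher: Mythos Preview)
Your proposal is correct and follows essentially the same route as the paper: expand the difference of the symplectic form at $\vec{z}_1$ and $\vec{z}_0$, eliminate the free initial tangents via the stage identities, use the symmetry of $\nabla^2 H$ (equivalently $K_\tau^{\top}J+JK_\tau=0$) to kill the $\mathcal{O}(h)$ residues, and regroup the $\mathcal{O}(h^2)$ terms into a double integral with kernel $B_\tau A_{\tau,\sigma}+B_\sigma A_{\sigma,\tau}-B_\tau B_\sigma$. The only cosmetic difference is that the paper phrases symplecticity via the wedge product $\dif\vec{z}\wedge J\dif\vec{z}$ and exterior differentials of the scheme, whereas you use the equivalent bilinear formulation $U^{\top}JV$ on pairs of tangent vectors; the algebraic skeleton and the key cancellations are identical.
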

\begin{proof}
Applying a csRK method to Hamiltonian system \eqref{Hs} it gives
\begin{equation}\label{crkHs}
\begin{split}
&\vec{Z}_\tau=\vec{z}_0+h\int_0^{1}A_{\tau,\,\sigma}\vec{f}(\vec{Z}_\sigma)\,\dif
\sigma,\;\tau\in[0,\,1],\\
&\vec{z}_{1}=\vec{z}_0+h\int_0^{1}B_\tau\vec{f}(\vec{Z}_\tau)\,\dif\tau,
\end{split}
\end{equation}
where $\vec{f}(\vec{z})=J^{-1}\nabla H(\vec{z})$. Our aim is to
verify the following identity
\begin{equation}\label{veri}
\dif\vec{z}_{1}\wedge J\dif\vec{z}_{1}=\dif\vec{z}_{0}\wedge
J\dif\vec{z}_{0}.
\end{equation}

In the following, we denote the $(k,l)$-element of $J$ by $J_{kl}$
and the $i$th component of a vector $\vec{v}$ by $\vec{v}^{(i)}$.
From the first formula of \eqref{crkHs}, we conclude
\begin{equation}\label{ab}
\dif\vec{z}^{(i)}_0=\dif\vec{Z}_\tau^{(i)}-h\int_0^{1}A_{\tau,\,\sigma}
\dif\vec{f}^{(i)}(\vec{Z}_\sigma)\,\dif\sigma
=\dif\vec{Z}_\sigma^{(i)}-h\int_0^{1}A_{\sigma,\,\tau}\dif\vec{f}^{(i)}
(\vec{Z}_\tau)\,\dif\tau,\;1\leq
i\leq 2d,
\end{equation}
which will be used later. Making difference between left-hand side
and right-hand side of \eqref{veri}, it yields
\begin{equation*}
\begin{split}
&\qquad \dif\vec{z}_{1}\wedge J\dif\vec{z}_{1}-\dif\vec{z}_{0}\wedge
J\dif\vec{z}_{0}=\sum_{k,l}^{2d} J_{kl}\dif\vec{z}^{(k)}_{1}\wedge
\dif\vec{z}^{(l)}_{1}-\sum_{k,l}^{2d}
J_{kl}\dif\vec{z}^{(k)}_{0}\wedge
\dif\vec{z}^{(l)}_{0}\\
&=\sum_{k,l}^{2d}
J_{kl}\Big((\dif\vec{z}^{(k)}_0+h\int_0^{1}B_\tau\dif\vec{f}^{(k)}(\vec{Z}_\tau)\,\dif\tau)\wedge
(\dif\vec{z}^{(l)}_0+h\int_0^{1}B_\sigma\dif\vec{f}^{(l)}(\vec{Z}_\sigma)\,\dif\sigma)
-\dif\vec{z}^{(k)}_{0}\wedge
\dif\vec{z}^{(l)}_{0}\Big)\\
&=\sum_{k,l}^{2d}
J_{kl}\Big(h\int_0^{1}B_\sigma\dif\vec{z}^{(k)}_0\wedge\dif\vec{f}^{(l)}(\vec{Z}_\sigma)\,\dif\sigma
+h\int_0^{1}B_\tau\dif\vec{f}^{(k)}(\vec{Z}_\tau)\wedge
\dif\vec{z}^{(l)}_0\,\dif\tau\\
&\quad+h^2\int_0^{1}\int_0^{1}B_\tau B_\sigma\dif\vec{f}^{(k)}(\vec{Z}_\tau)\wedge
\dif\vec{f}^{(l)}(\vec{Z}_\sigma)\,\dif\tau\dif\sigma\Big)\\
&=\sum_{k,l}^{2d}J_{kl}\Big(h\int_0^{1}B_\sigma\underbrace{\big(\dif\vec{Z}_\sigma^{(k)}-
h\int_0^{1}A_{\sigma,\,\tau}\dif\vec{f}^{(k)}(\vec{Z}_\tau)\,\dif\tau\big)}_{(a)}
\wedge\dif\vec{f}^{(l)}(\vec{Z}_\sigma)\,\dif\sigma\\
&\quad+h\int_0^{1}B_\tau\dif\vec{f}^{(k)}(\vec{Z}_\tau)\wedge
\underbrace{\big(\dif\vec{Z}_\tau^{(l)}-h\int_0^{1}A_{\tau,\,\sigma}
\dif\vec{f}^{(l)}(\vec{Z}_\sigma)\,\dif\sigma\big)}_{(b)}\,\dif\tau\\
&\quad+h^2\int_0^{1}\int_0^{1}B_\tau B_\sigma\dif\vec{f}^{(k)}(\vec{Z}_\tau)
\wedge\dif\vec{f}^{(l)}(\vec{Z}_\sigma)\,\dif\tau\dif\sigma\Big)\\
&=\sum_{k,l}^{2d}J_{kl}\Big(h\int_0^{1}B_\sigma\dif\vec{Z}_\sigma^{(k)}
\wedge\dif\vec{f}^{(l)}(\vec{Z}_\sigma)\,\dif\sigma+h\int_0^{1}B_\tau
\dif\vec{f}^{(k)}(\vec{Z}_\tau)\wedge
\dif\vec{Z}_\tau^{(l)}\,\dif\tau\\
&\quad-h^2\underbrace{\int_0^{1}\int_0^{1}M_{\tau,\sigma}
\dif\vec{f}^{(k)}(\vec{Z}_\tau)\wedge\dif\vec{f}^{(l)}(\vec{Z}_\sigma)\,
\dif\tau\dif\sigma}_{(c)}\Big)\\
&=h\sum_{k,l}^{2d}J_{kl}\Big(\int_0^{1}B_\sigma\dif\vec{Z}_\sigma^{(k)}
\wedge\dif\vec{f}^{(l)}(\vec{Z}_\sigma)\,\dif\sigma+\int_0^{1}B_\tau
\dif\vec{f}^{(k)}(\vec{Z}_\tau)\wedge
\dif\vec{Z}_\tau^{(l)}\,\dif\tau\Big)\\
&=h\Big(\int_0^{1}B_\sigma \dif\vec{Z}_\sigma \wedge
J\dif\vec{f}(\vec{Z}_\sigma)\,\dif\sigma+\int_0^{1}B_\tau
\dif\vec{f}(\vec{Z}_\tau)\wedge J\dif\vec{Z}_\tau\,\dif\tau\Big)\\
&=2h\int_0^{1}B_\sigma \dif\vec{Z}_\sigma \wedge
J\dif\vec{f}(\vec{Z}_\sigma)\,\dif\sigma,
\end{split}
\end{equation*}
where $(a)$ and $(b)$ are derived by using \eqref{ab}, and $(c)$
vanishes by \eqref{symcond} since $M_{\tau,\sigma}:=B_\tau
A_{\tau,\sigma}+B_\sigma A_{\sigma,\tau}-B_\tau B_\sigma\equiv0$. At
last, the proof finishes by taking into account that
\begin{equation*}
\dif\vec{Z}_\sigma \wedge
J\dif\vec{f}(\vec{Z}_\sigma)=\dif\vec{Z}_\sigma \wedge
JJ^{-1}\nabla^{2}H(\vec{Z}_\sigma)\dif\vec{Z}_\sigma=\dif\vec{Z}_\sigma
\wedge \nabla^{2}H(\vec{Z}_\sigma)\dif\vec{Z}_\sigma=0.
\end{equation*}
\end{proof}
\begin{rem}
The symplectic condition \eqref{symcond} is very similar to the
classic result for traditional RK methods (which has been proved to
be necessary for irreducible methods \cite{hairerlw06gni}). Thus, we
conjecture that the condition is also essentially necessary. We
leave the proof of this conjecture to our future work.
\end{rem}

It is not an easy task to find out all the symplectic csRK methods
from the condition given in Theorem \ref{symcond1}. Tang et al
\cite{Tangs14cor,Tanglx16cos} have presented an alternative
condition for symplecticity, which can be seen as a reduction of
\eqref{symcond}. Now we revisit the result given in
\cite{Tangs14cor,Tanglx16cos}, and actually it suffices for us to
get symplectic integrators of arbitrarily high order.

\begin{thm}\cite{Tangs14cor,Tanglx16cos}\label{symthm}
A csRK method with $B_\tau=1,\,C_\tau=\tau$ is symplectic if
$A_{\tau,\,\sigma}$ has the following form in terms of Legendre
polynomials
\begin{equation}\label{sym:csRK2}
A_{\tau,\,\sigma}=\frac{1}{2}+\sum_{0<i+j\in
\mathbb{Z}}\alpha_{(i,j)}
P_i(\tau)P_j(\sigma),\quad\alpha_{(i,j)}\in \mathbb{R},
\end{equation}
where $\alpha_{(i,j)}$ is skew-symmetric, i.e.,
$\alpha_{(i,j)}=-\alpha_{(j,i)},\,i+j>0$.
\end{thm}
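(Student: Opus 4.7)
The plan is to reduce the claim to the symplecticity criterion already established in Theorem \ref{symcond1}, namely
\begin{equation*}
B_\tau A_{\tau,\sigma}+B_\sigma A_{\sigma,\tau}\equiv B_\tau B_\sigma.
\end{equation*}
Under the standing hypothesis $B_\tau\equiv1$, this condition collapses to the much simpler identity
\begin{equation*}
A_{\tau,\sigma}+A_{\sigma,\tau}\equiv 1,\qquad \tau,\sigma\in[0,1].
\end{equation*}
So the entire task is to verify this identity for any $A_{\tau,\sigma}$ of the prescribed form \eqref{sym:csRK2}.

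First I would substitute the series expansion directly. The constant parts contribute $\tfrac12+\tfrac12=1$, which matches the right-hand side. For the double sum, I would relabel the dummy indices $(i,j)\mapsto(j,i)$ in the $A_{\sigma,\tau}$ contribution, so that the two series can be combined term-by-term into
\begin{equation*}
\sum_{0<i+j\in\mathbb Z}\bigl(\alpha_{(i,j)}+\alpha_{(j,i)}\bigr)P_i(\tau)P_j(\sigma).
\end{equation*}
The skew-symmetry hypothesis $\alpha_{(i,j)}=-\alpha_{(j,i)}$ for $i+j>0$ makes every coefficient in the combined series vanish, so the whole double sum is identically zero and we are left with $A_{\tau,\sigma}+A_{\sigma,\tau}\equiv 1$, as required.

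There is really no obstacle in this proof: once the reduction to $A_{\tau,\sigma}+A_{\sigma,\tau}\equiv 1$ is made via Theorem \ref{symcond1}, the verification is a one-line consequence of the skew-symmetry of $\alpha_{(i,j)}$ together with the fact that the Legendre basis products $P_i(\tau)P_j(\sigma)$ are a basis (so the identity holds pointwise once all coefficients vanish, and convergence of the expansion is inherited from the assumed form of $A_{\tau,\sigma}$). The only subtlety worth flagging is the index-swap step, where one must note that the summation region $\{i+j>0\}$ is symmetric under $(i,j)\leftrightarrow(j,i)$, so the relabeling is legitimate and no boundary terms are lost.
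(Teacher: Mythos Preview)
Your proof is correct and follows essentially the same approach as the paper: reduce the symplecticity condition of Theorem~\ref{symcond1} to $A_{\tau,\sigma}+A_{\sigma,\tau}\equiv 1$ using $B_\tau\equiv 1$, then swap indices in the Legendre expansion of $A_{\sigma,\tau}$ so that the skew-symmetry of $\alpha_{(i,j)}$ annihilates the double sum. The only cosmetic difference is that the paper phrases the computation as \emph{deriving} the constraints $\alpha_{(0,0)}=\tfrac12$ and $\alpha_{(i,j)}=-\alpha_{(j,i)}$ from the reduced condition, whereas you phrase it as \emph{verifying} that these constraints force the condition to hold.
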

\begin{proof}
Under the assumption $B_\tau=1,\,C_\tau=\tau$, symplectic condition
\eqref{symcond} is reduced to
\begin{equation}\label{symcond2}
A_{\tau,\sigma}+A_{\sigma,\tau}\equiv 1,\; \text{for}\;\;
\tau,\,\sigma \in [0,1],
\end{equation}
By using the expansion \eqref{expansion} and exchanging
$\tau\leftrightarrow\sigma$, we have
\begin{equation*}
A_{\sigma,\,\tau}=\sum_{0\leq i,j\in\mathbb{Z}}\alpha_{(i,j)}
P_i(\sigma)P_j(\tau) =\sum_{0\leq i,j\in\mathbb{Z}}\alpha_{(j,i)}
P_j(\sigma)P_i(\tau).
\end{equation*}
Substituting this formula into \eqref{symcond2} and collecting the
like terms gives
$$\alpha_{(0,0)}=\frac{1}{2};\;\,\alpha_{(i,j)}=-\alpha_{(j,i)},\,i+j>0,$$
which completes the proof.
\end{proof}

Consequently, a simple way to design symplectic csRK methods of
arbitrarily high order pops out by putting Theorem \ref{symthm} and
\ref{mainthm} together, due to that suitable RK coefficients can be
easily tuned according to these theorems. Another way is to
substitute \eqref{sym:csRK2} into order conditions (cf. subsection
2.1) one by one, which then produces symplectic methods order by
order.

Here we give the following result to show that symplectic RK methods
can be easily derived based on symplectic csRK methods. It was shown
in \cite{Tangs14cor,Tanglx16cos} that many classic high-order
symplectic RK methods including Gauss-Legendre RK schemes, Radau IB,
Radau IIB and Lobatto IIIE can be retrieved in this way.

\begin{thm}
The RK scheme \eqref{crk:quad} (with coefficients
$(b_{j}A_{c_i,c_j},b_iB_{c_i},\,c_i)$) based on a symplectic csRK
method with coefficients satisfying \eqref{symcond} is always
symplectic.
\end{thm}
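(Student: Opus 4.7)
The plan is to reduce the claim to the classical symplecticity criterion for a standard Runge-Kutta method and then apply the continuous-stage condition (\ref{symcond}) pointwise at the quadrature nodes. Recall that an $s$-stage RK method with coefficients $(a_{ij}, \hat{b}_i, c_i)$ is symplectic whenever
\begin{equation*}
\hat{b}_i a_{ij} + \hat{b}_j a_{ji} = \hat{b}_i \hat{b}_j, \qquad 1 \le i,j \le s,
\end{equation*}
as established in the classical work of Lasagni, Sanz-Serna, and Suris. This criterion is what I would take as a black box, given that the derived scheme (\ref{crk:quad}) is genuinely an $s$-stage RK method in the traditional sense.

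I would first identify the coefficients of (\ref{crk:quad}) explicitly: $a_{ij} = b_j A_{c_i, c_j}$, $\hat{b}_i = b_i B_{c_i}$, and nodes $c_i$. Substituting these into the classical symplecticity criterion above, the left-hand side becomes
\begin{equation*}
b_i B_{c_i} \cdot b_j A_{c_i, c_j} + b_j B_{c_j} \cdot b_i A_{c_j, c_i} = b_i b_j \bigl( B_{c_i} A_{c_i, c_j} + B_{c_j} A_{c_j, c_i} \bigr),
\end{equation*}
while the right-hand side is $b_i b_j B_{c_i} B_{c_j}$.

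At this point, I would invoke the hypothesis that the underlying csRK method satisfies (\ref{symcond}). Since (\ref{symcond}) is a pointwise identity on $[0,1] \times [0,1]$, evaluating it at the pair $(\tau, \sigma) = (c_i, c_j) \in [0,1]^2$ yields
\begin{equation*}
B_{c_i} A_{c_i, c_j} + B_{c_j} A_{c_j, c_i} = B_{c_i} B_{c_j}.
\end{equation*}
Multiplying both sides by $b_i b_j$ matches the two sides of the classical symplecticity criterion, and the conclusion follows.

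There is essentially no genuine obstacle: the argument is a direct substitution exploiting the fact that the continuous-stage symplecticity identity holds for \emph{all} $\tau, \sigma \in [0,1]$, which in particular includes the quadrature nodes. The only subtlety worth flagging explicitly is that the quadrature weights $b_i$ play no role in the cancellation; they simply factor out. This explains why symplecticity of the csRK scheme is inherited by any quadrature discretization of (\ref{crk}), regardless of the order or specific nodes of the underlying quadrature formula.
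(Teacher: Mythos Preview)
Your proof is correct and follows essentially the same approach as the paper: evaluate the continuous-stage symplecticity identity \eqref{symcond} at the quadrature nodes $(c_i,c_j)$, multiply through by $b_ib_j$, and invoke the classical RK symplecticity criterion. The paper's proof is slightly more terse but structurally identical.
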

\begin{proof}
By taking into account that
$$B_{c_i}A_{c_i,\,c_j}+B_{c_j}A_{c_j,\,c_i}=B_{c_i}B_{c_j},\;\;i,j=1,\cdots,s,$$
we have
$$(b_iB_{c_i})(b_jA_{c_i,\,c_j})+(b_jB_{c_j})(b_iA_{c_j,\,c_i})=
(b_iB_{c_i})(b_jB_{c_j}),\;\;i,j=1,\cdots,s,$$ which get the final
result by a classic theorem (cf., \cite{hairerlw06gni}, page 192).
\end{proof}

\subsection{Symmetric csRK methods}

As pointed out in \cite{hairerlw06gni}, symmetric methods as well as
symplectic methods play a central role in the geometric integration
of differential equations. In this part, we will give the condition
for a csRK method to be symmetric and then show a simple way to
construct such geometric integrators.
\begin{defi}\cite{hairerlw06gni}
A one-step method $\phi_h$ is called symmetric (or time-reversible)
if it satisfies
$$\phi^*_h=\phi_h,$$
where $\phi^*_h=\phi^{-1}_{-h}$ is referred to as the adjoint method
of $\phi_h$.
\end{defi}
By the definition, a method $z_1=\phi_h(z_0; t_0,t_1)$ is symmetric
if exchanging $h\leftrightarrow -h$, $z_0\leftrightarrow z_1$ and
$t_0\leftrightarrow t_1$ leaves the original method unaltered. From
the definition above, we can prove the following theorem.
\begin{thm}\label{symcon}
Under the assumption \eqref{consis} and we suppose $\breve{B}(\rho)$
holds with $\rho\geq1$ (which means the method is of order at least
1), then a csRK method is symmetric if
\begin{equation}\label{symm}
A_{\tau,\,\sigma}+A_{1-\tau,\,1-\sigma}\equiv
B_{\sigma},\;\;\tau,\,\sigma\in[0,1].
\end{equation}
\end{thm}
\begin{proof}
Obviously, \eqref{symm} implies $B_{\sigma}\equiv B_{1-\sigma}$ in
$[0,1]$. Furthermore, by taking an integral on both sides of
\eqref{symm} with respect to $\sigma$, we get
$C_\tau+C_{1-\tau}\equiv1,\;\;\tau\in[0,1]$.

Next, let us establish the adjoint method of a given csRK method.
From \eqref{crk}, by interchanging $t_0, \vec{z}_0, h$ with $t_1,
\vec{z}_1, -h$ respectively, we have
\begin{equation*}
\begin{split}
&\vec{Z}_\tau=\vec{z}_1-h\int_0^{1}A_{\tau,\,\sigma}\vec{f}(t_1-C_\sigma
h,\vec{Z}_\sigma)\,\dif
\sigma,\;\tau\in[0,\,1],\\
&\vec{z}_{0}=\vec{z}_1-h\int_0^{1}B_\tau\vec{f}(t_1-C_\tau
h,\vec{Z}_\tau)\,\dif\tau,
\end{split}
\end{equation*}
Note that $t_1-C_\tau h=t_0+(1-C_\tau)h$, then the second formula
can be recast as
\begin{equation*}
\vec{z}_{1}=\vec{z}_0+h\int_0^{1}B_\tau\vec{f}(t_0+(1-C_\tau)
h,\vec{Z}_\tau)\,\dif\tau.
\end{equation*}
By plugging it into the first formula, then it ends up with
\begin{equation*}
\begin{split}
&\vec{Z}_\tau=\vec{z}_0+h\int_0^{1}(B_\sigma-A_{\tau,\,\sigma})\vec{f}(t_0+(1-C_\sigma)
h,\vec{Z}_\sigma)\,\dif
\sigma,\;\tau\in[0,\,1],\\
&\vec{z}_{1}=\vec{z}_0+h\int_0^{1}B_\tau\vec{f}(t_0+(1-C_\tau)
h,\vec{Z}_\tau)\,\dif\tau,
\end{split}
\end{equation*}
By replacing $\tau$ and $\sigma$ with $1-\tau$ and $1-\sigma$
respectively, and with the help of change of integral variables, we
obtain an equivalent scheme
\begin{equation*}
\begin{split}
&\vec{Z}^*_\tau=\vec{z}_0+h\int_0^{1}A^*_{\tau,\,\sigma}\vec{f}(t_0+C^*_\sigma
h,\vec{Z}^*_\sigma)\,\dif
\sigma,\;\tau\in[0,\,1],\\
&\vec{z}_{1}=\vec{z}_0+h\int_0^{1}B^*_\tau\vec{f}(t_0+C^*_\tau
h,\vec{Z}^*_\tau)\,\dif\tau,
\end{split}
\end{equation*}
which is the adjoint method of the original method, where
$\vec{Z}^*_\tau=\vec{Z}_{1-\tau}$ and
\begin{equation*}
\begin{split}
&A^*_{\tau,\,\sigma}=B_{1-\sigma}-A_{1-\tau,\,1-\sigma}\equiv B_{\sigma}-A_{1-\tau,\,1-\sigma},\\
&B^*_\tau=B_{1-\tau}\equiv B_\tau,\;\;C^*_\tau=1-C_{1-\tau}\equiv
C_\tau.
\end{split}
\end{equation*}
Note that a csRK method can be uniquely determined by its
coefficients (cf. Theorem \ref{unex}), hence if we require
$A^*_{\tau,\,\sigma}=A_{\tau,\,\sigma}$, i.e., \eqref{symm}, then
the original csRK method is symmetric.
\end{proof}
\begin{rem}
The symmetric condition \eqref{symm} is very similar to the classic
result for traditional RK methods (which has been proved to be
necessary for irreducible methods \cite{hairerlw06gni}). Thus, we
conjecture that the condition is also essentially necessary. We
don't plan to pursue this conjecture here.
\end{rem}
\begin{thm}\label{symm_quad}
If the underlying symmetric csRK method with coefficients
$(A_{\tau,\sigma},B_\tau,C_\tau)$ satisfying the condition of
Theorem \ref{symcon}, then the associated RK method \eqref{crk:quad}
is symmetric, provided that the quadrature weights and abscissae
satisfy $b_{s+1-i}=b_i$ and $c_{s+1-i}=1-c_i$ for all $i$.
\end{thm}
\begin{proof}
An available classic result for an $s$-stage standard RK method
$(a_{ij},\,b_i,\,c_i)$ to be symmetric has revealed the following
sufficient condition (see, e.g., \cite{hairerlw06gni})
\begin{equation*}
a_{ij}+a_{s+1-i,s+1-j}=b_j,\;\;i,\,j=1,\cdots,s.
\end{equation*}
Observe that
\begin{equation}
A_{c_i,\,c_j}+A_{1-c_i,\,1-c_j}= B_{c_j},\;\;i,\,j=1,\cdots,s,
\end{equation}
and on account of $b_{s+1-i}=b_i$, $c_{s+1-i}=1-c_i$, it yields
\begin{equation}
(b_jA_{c_i,\,c_j})+(b_{s+1-j}A_{c_{s+1-i},\,c_{s+1-j}})=
b_jB_{c_j},\;\;i,\,j=1,\cdots,s,
\end{equation}
which completes the proof by the classic result.
\end{proof}
\begin{thm}\cite{Tangs14cor}\label{symmthm}
The csRK method with $B_\tau=1$ and $C_\tau=\tau$ is symmetric if
$A_{\tau,\,\sigma}$ has the following form in terms of Legendre
polynomials
\begin{equation}\label{symm3}
A_{\tau,\,\sigma}=\frac{1}{2}+\sum_{i+j\,\text{is}\,\text{odd}\atop
0\leq i,j\in \mathbb{Z}}\omega_{ij}
P_i(\tau)P_j(\sigma),\quad\omega_{ij}\in \mathbb{R}.
\end{equation}
\end{thm}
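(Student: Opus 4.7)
My plan is to verify that the ansatz \eqref{symm3} satisfies the symmetry criterion \eqref{symm} established in the previous theorem. Since we are assuming $B_\tau \equiv 1$ and $C_\tau = \tau$, the criterion \eqref{symm} reduces to the single functional identity
\[
A_{\tau,\sigma} + A_{1-\tau,\,1-\sigma} \equiv 1, \qquad \tau,\sigma\in[0,1],
\]
and the internal consistency \eqref{consis} together with $C_\tau+C_{1-\tau}\equiv 1$ will follow automatically once this identity holds (alternatively, they can be checked directly from the formula for $A_{\tau,\sigma}$).

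The key structural fact I will invoke is the reflection identity for the normalized shifted Legendre polynomials,
\[
P_\iota(1-x) = (-1)^\iota P_\iota(x), \qquad \iota=0,1,2,\ldots,
\]
which is immediate from the Rodrigues formula because the map $x\mapsto 1-x$ sends $x^\iota(x-1)^\iota$ to $(1-x)^\iota(-x)^\iota = (-1)^\iota x^\iota(x-1)^\iota$, after which the $\iota$-fold differentiation contributes another factor of $(-1)^\iota$, producing $(-1)^{2\iota}\cdot(-1)^\iota = (-1)^\iota$ on the right. Consequently,
\[
P_i(1-\tau)\,P_j(1-\sigma) = (-1)^{i+j}\, P_i(\tau)\,P_j(\sigma).
\]

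With this in hand, I substitute $\tau\mapsto 1-\tau$, $\sigma\mapsto 1-\sigma$ in \eqref{symm3} and add to the original expression:
\[
A_{\tau,\sigma}+A_{1-\tau,\,1-\sigma} = 1 + \sum_{\substack{0\leq i,j\in\mathbb{Z}\\ i+j\ \text{odd}}} \omega_{ij}\bigl[1+(-1)^{i+j}\bigr] P_i(\tau)P_j(\sigma).
\]
Because every surviving index pair $(i,j)$ in the sum satisfies $i+j$ odd, each bracket vanishes, leaving $A_{\tau,\sigma}+A_{1-\tau,\,1-\sigma}\equiv 1 = B_\sigma$. Applying the preceding theorem yields the symmetry of the csRK method.

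There is really no obstacle here: the argument reduces entirely to the parity behaviour of shifted Legendre polynomials under $x\mapsto 1-x$, and the constant term $\tfrac12$ in \eqref{symm3} is precisely what accounts for the $B_\sigma\equiv 1$ on the right-hand side of the symmetry condition. The only subtlety worth flagging is that the summation in \eqref{symm3} is restricted to pairs with $i+j$ odd, so it genuinely requires $\omega_{ij}$ only for those indices; pairs with $i+j$ even are excluded a priori, which is exactly why the cancellation is complete rather than partial.
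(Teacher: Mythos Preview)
Your proof is correct and follows exactly the approach the paper intends: the paper's own proof is a one-line reference to ``the same technique shown in Theorem~\ref{symthm},'' i.e., reduce the symmetry condition \eqref{symm} under $B_\tau\equiv1$ to $A_{\tau,\sigma}+A_{1-\tau,1-\sigma}\equiv1$, use the Legendre expansion, and exploit the parity $P_\iota(1-x)=(-1)^\iota P_\iota(x)$. One small slip worth cleaning up: in your Rodrigues justification you write $(1-x)^\iota(-x)^\iota=(-1)^\iota x^\iota(x-1)^\iota$, but in fact $(1-x)^\iota(-x)^\iota = x^\iota(x-1)^\iota$, so $g(x)=x^\iota(x-1)^\iota$ is \emph{invariant} under $x\mapsto 1-x$; the factor $(-1)^\iota$ then comes solely from the chain rule when you differentiate $g(1-x)=g(x)$ $\iota$ times, giving $g^{(\iota)}(1-x)=(-1)^\iota g^{(\iota)}(x)$ and hence the reflection identity you need.
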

\begin{proof}
The result can be easily verified by using the same technique shown
in Theorem \ref{symthm} (or cf. \cite{Tangs14cor}).
\end{proof}

Theorem \ref{symmthm} is very useful for constructing symmetric csRK
methods in conjunction with Theorem \ref{mainthm}. It is easy to get
a symmetric RK methods based on symmetric csRK methods by using a
symmetric quadrature formula \cite{Tangs14cor}.

\subsection{Energy-preserving csRK methods}

Energy-preserving csRK methods were firstly studied in
\cite{Iavernarop07sst,quispelm08anc,brugnanoit10hbv,hairer10epv,tangs12ana,Tangs14cor},
and it was shown that there exists energy-preserving csRK methods
which are conjugate-symplectic up to a finite order
\cite{hairer10epv,hairerz13oco,Tangs14cor}. Miyatake
\cite{miyatake14aep} provided a sufficient condition for energy
conservation, and then he \& Butcher provided a proof for the
necessity of the condition in a ``weak" sense \cite{miyatake15aco}.
\begin{thm}\label{epcon}\cite{miyatake15aco}
A csRK method is energy-preserving if $\frac{\partial}{\partial
\tau}A_{\tau,\,\sigma}$ is symmetric, i.e.,
\begin{equation*}
\frac{\partial}{\partial
\tau}A_{\tau,\,\sigma}\equiv\frac{\partial}{\partial
\sigma}A_{\sigma,\,\tau},\; \text{for}\;\; \tau,\,\sigma \in [0,1],
\end{equation*}
and $A_{0,\,\sigma}\equiv 0,\; A_{1,\,\sigma}\equiv B_{\sigma}$.
\end{thm}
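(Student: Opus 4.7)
The plan is to reduce the change in Hamiltonian across one step to an integral of $\frac{d}{d\tau}H(\vec{Z}_\tau)$ and then show that the integrand pairs up antisymmetrically under the hypothesis on $\partial_\tau A_{\tau,\sigma}$. First I would observe that the two boundary conditions collapse the stage equation at the endpoints: setting $\tau=0$ in the stage formula and using $A_{0,\sigma}\equiv 0$ gives $\vec{Z}_0=\vec{z}_0$, while setting $\tau=1$ and using $A_{1,\sigma}\equiv B_\sigma$ together with the update formula of \eqref{crk} gives $\vec{Z}_1=\vec{z}_1$. Consequently $H(\vec{z}_1)-H(\vec{z}_0)=H(\vec{Z}_1)-H(\vec{Z}_0)=\int_0^1 \nabla H(\vec{Z}_\tau)^{\top}\frac{d\vec{Z}_\tau}{d\tau}\,\dif \tau$.

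Next I would differentiate the stage equation in $\tau$ to obtain
\begin{equation*}
\frac{d\vec{Z}_\tau}{d\tau}=h\int_0^1 \frac{\partial A_{\tau,\sigma}}{\partial \tau}\,\vec{f}(\vec{Z}_\sigma)\,\dif \sigma=h\int_0^1 \frac{\partial A_{\tau,\sigma}}{\partial \tau}\,J^{-1}\nabla H(\vec{Z}_\sigma)\,\dif \sigma,
\end{equation*}
and substitute back to get
\begin{equation*}
H(\vec{z}_1)-H(\vec{z}_0)=h\int_0^1\!\!\int_0^1 \frac{\partial A_{\tau,\sigma}}{\partial \tau}\,\nabla H(\vec{Z}_\tau)^{\top}J^{-1}\nabla H(\vec{Z}_\sigma)\,\dif\sigma\,\dif \tau=:I.
\end{equation*}

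The third step is a dummy-variable swap: renaming $\tau\leftrightarrow\sigma$ in $I$ and then invoking the symmetry hypothesis $\partial_\sigma A_{\sigma,\tau}=\partial_\tau A_{\tau,\sigma}$ yields
\begin{equation*}
I=h\int_0^1\!\!\int_0^1 \frac{\partial A_{\tau,\sigma}}{\partial \tau}\,\nabla H(\vec{Z}_\sigma)^{\top}J^{-1}\nabla H(\vec{Z}_\tau)\,\dif\sigma\,\dif \tau.
\end{equation*}
Since $J^{-1}$ is skew-symmetric, the scalar $\nabla H(\vec{Z}_\sigma)^{\top}J^{-1}\nabla H(\vec{Z}_\tau)$ equals $-\nabla H(\vec{Z}_\tau)^{\top}J^{-1}\nabla H(\vec{Z}_\sigma)$, which gives $I=-I$ and therefore $I=0$, completing the argument.

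There is no real obstacle here; the argument is essentially a transposition-plus-skew-symmetry trick in the spirit of the classical proof of $B$-series energy conservation. The only step that requires some care is verifying that the boundary condition $A_{1,\sigma}\equiv B_\sigma$ really does force $\vec{Z}_1=\vec{z}_1$ (i.e., that the update formula is consistent with the stage at $\tau=1$) and that differentiating the stage equation under the integral sign is legitimate, which follows from sufficient regularity of $A_{\tau,\sigma}$ in $\tau$. No use of simplifying assumptions, quadrature, or Legendre expansions is needed; the proof is a direct energy-balance identity.
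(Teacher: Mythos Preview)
Your argument is correct. Note, however, that the paper does not supply its own proof of this theorem; it is simply quoted from \cite{miyatake15aco} and left unproved in the text. There is therefore nothing in the paper to compare your approach against beyond the citation itself.

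For the record, your proof is exactly the standard energy-balance argument one finds in the cited source: the boundary conditions $A_{0,\sigma}\equiv 0$ and $A_{1,\sigma}\equiv B_\sigma$ pin the stage trajectory to the step endpoints, the chain rule turns $H(\vec{z}_1)-H(\vec{z}_0)$ into a double integral with kernel $\partial_\tau A_{\tau,\sigma}$, and then the symmetry hypothesis on the kernel combines with the skew-symmetry of $J^{-1}$ to force $I=-I$. The only mild caveats you already flagged---differentiation under the integral sign and the identification $\vec{Z}_1=\vec{z}_1$---are routine under the standing regularity assumptions. No gaps.
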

\begin{thm}\cite{tangs12ana,Tangs14cor}\label{EPm2}
Consider the csRK method \eqref{crk} with $B_\tau=1$, $C_\tau=\tau$
and
\begin{equation}\label{epmcoe2}
A_{\tau,\,\sigma}=\sum_{0\leq\iota\in\mathbb{Z}}\omega_{\iota}\int_0^{\tau}g_\iota(x)\,\dif
x\;g_\iota(\sigma),\;\omega_{\iota}\in\mathbb{R},
\end{equation}
where $g_\iota(x)\in L^2([0,1])$ with $g_\iota(x)=\sum\limits_{0\leq
\kappa\in\mathbb{Z}}a_{\iota \kappa}P_\kappa(x)$ (Legendre
expansion), $a_{\iota \kappa}\in\mathbb{R}$, then we have
\begin{itemize}
\item[\emph{(a)}]  $\breve{C}(\eta)$ holds if and only if the
parameters $\omega_{\iota}$ and $a_{\iota\kappa}$
($\iota,\kappa=0,1,2,\cdots$)  satisfy
$$\sum\limits_{0\leq\iota\in\mathbb{Z}}\omega_{\iota}a_{\iota i}a_{\iota
j}=
\begin{cases}
\delta_{ij},  &   0\leq i,j\leq \eta-1,\\
0, & 0\leq i \leq\eta-1,  j\geq\eta;
\end{cases}$$
\item[\emph{(b)}] if $\breve{C}(\eta)$ holds, then $\breve{D}(\eta-1)$
also holds;
\item[\emph{(c)}] the method is of order $2\eta_{_{M}}$, where
$\eta_{_{M}}=\max\{\eta\in \mathbb{Z}:
\breve{C}(\eta)\;\mbox{holds}\}$, and exactly preserves the energy
of system \eqref{Hs}.
\end{itemize}
\end{thm}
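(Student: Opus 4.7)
The plan is to treat the three claims in order, each time exploiting the structure of \eqref{epmcoe2} together with the Legendre expansion $g_\iota(x)=\sum_\kappa a_{\iota\kappa}P_\kappa(x)$, and to invoke Theorem \ref{crk:order} and Theorem \ref{epcon} already established in the paper.

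\emph{Part (a).} Since $\{P_0,\ldots,P_{\eta-1}\}$ spans the polynomials of degree $\le\eta-1$ and $C_\tau=\tau$, the condition $\breve{C}(\eta)$ is equivalent to
\[
\int_0^1 A_{\tau,\sigma}P_j(\sigma)\,\dif\sigma=\int_0^\tau P_j(x)\,\dif x,\qquad j=0,1,\ldots,\eta-1.
\]
Substituting \eqref{epmcoe2} and using orthonormality of $\{P_\kappa\}$ to identify $\int_0^1 g_\iota(\sigma)P_j(\sigma)\,\dif\sigma=a_{\iota j}$, the left-hand side becomes $\sum_\iota\omega_\iota a_{\iota j}\int_0^\tau g_\iota(x)\,\dif x=\sum_k\big(\sum_\iota\omega_\iota a_{\iota j}a_{\iota k}\big)\int_0^\tau P_k(x)\,\dif x$. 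Because the functions $\{\int_0^\tau P_k(x)\,\dif x\}_{k\ge0}$ are linearly independent in $\tau$ (each is a polynomial of exact degree $k+1$), I can equate coefficients against the right-hand side $\int_0^\tau P_j(x)\,\dif x$ to read off exactly the two stated relations: $\sum_\iota\omega_\iota a_{\iota i}a_{\iota j}=\delta_{ij}$ for $0\le i,j\le\eta-1$ and $\sum_\iota\omega_\iota a_{\iota i}a_{\iota j}=0$ for $0\le i\le\eta-1$, $j\ge\eta$.

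\emph{Part (b).} Assuming $\breve{C}(\eta)$, I verify the identity $\int_0^1\tau^{\kappa-1}A_{\tau,\sigma}\,\dif\tau=\tfrac{1}{\kappa}(1-\sigma^\kappa)$ for $\kappa=1,\ldots,\eta-1$. Integration by parts in the inner integral gives $\int_0^1\tau^{\kappa-1}\int_0^\tau g_\iota(x)\,\dif x\,\dif\tau=\tfrac{1}{\kappa}\int_0^1(1-\tau^\kappa)g_\iota(\tau)\,\dif\tau$, so the left-hand side becomes $\tfrac{1}{\kappa}\sum_\iota\omega_\iota g_\iota(\sigma)\int_0^1(1-\tau^\kappa)g_\iota(\tau)\,\dif\tau$. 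Expand the polynomial $1-\tau^\kappa=\sum_{l=0}^\kappa d_l P_l(\tau)$; since $\kappa\le\eta-1$, all indices $l$ appearing are in the range where part (a) applies, and the double sum collapses via $\sum_\iota\omega_\iota a_{\iota l}a_{\iota m}=\delta_{lm}$ to $\sum_l d_l P_l(\sigma)=1-\sigma^\kappa$. This is the expected right-hand side.

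\emph{Part (c).} For the order, note that $B_\tau\equiv 1$, $C_\tau=\tau$ gives $\breve{B}(\infty)$ immediately. Combining $\breve{C}(\eta_M)$ with $\breve{D}(\eta_M-1)$ from part (b) and invoking Theorem \ref{crk:order} yields order at least $\min(\infty,2\eta_M+2,\eta_M+(\eta_M-1)+1)=2\eta_M$. For energy conservation I appeal to Theorem \ref{epcon} and check its three hypotheses on $A_{\tau,\sigma}$. Differentiating \eqref{epmcoe2} gives $\partial_\tau A_{\tau,\sigma}=\sum_\iota\omega_\iota g_\iota(\tau)g_\iota(\sigma)$, which is manifestly symmetric in $(\tau,\sigma)$. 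The boundary value $A_{0,\sigma}=0$ is immediate from the empty integral. For $A_{1,\sigma}=B_\sigma\equiv1$, I use part (a) with $\eta\ge1$: since $a_{\iota 0}=\int_0^1 g_\iota(x)\,\dif x$, we compute $A_{1,\sigma}=\sum_\iota\omega_\iota a_{\iota 0}g_\iota(\sigma)=\sum_j P_j(\sigma)\sum_\iota\omega_\iota a_{\iota 0}a_{\iota j}=P_0(\sigma)\cdot 1=1$. Hence Theorem \ref{epcon} applies and the method preserves $H$ exactly.

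The routine but error-prone step is the bookkeeping in part (a), where one must make sure the linear-independence argument is carried out against the right family of functions of $\tau$; everything else reduces to short computations using orthonormality of shifted Legendre polynomials and the two previously-cited theorems.
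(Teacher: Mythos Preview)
The paper itself does not prove this theorem---it is quoted from \cite{Tangs14cor} and stated without argument---so there is no in-paper proof to compare against. Your proposal is correct and follows the natural route one would take: rewriting $\breve{C}(\eta)$ in the Legendre basis, identifying $\int_0^1 g_\iota P_j=a_{\iota j}$, and matching coefficients via linear independence of $\{\int_0^\tau P_k(x)\,\dif x\}_{k\ge0}$ (a claim most cleanly justified by differentiating in $\tau$ and using orthonormality of the $P_k$); the integration-by-parts reduction for (b) is clean and the appeal to Theorems~\ref{crk:order} and~\ref{epcon} for (c) is the intended mechanism.

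Two minor remarks. First, your verification of $A_{1,\sigma}\equiv 1$ uses part (a) with $i=0$, hence tacitly assumes $\eta_M\ge1$; this is exactly the consistency relation~\eqref{consis}, which is a standing hypothesis in the paper, so the assumption is harmless but worth making explicit. Second, Theorem~\ref{crk:order} only furnishes a lower bound, so strictly speaking you have established order $\ge 2\eta_M$ rather than order exactly $2\eta_M$; showing the order is not higher would require a separate argument that failure of $\breve{C}(\eta_M+1)$ actually obstructs the relevant order conditions, which you have not supplied (and which the phrasing in the theorem likely does not demand).
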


Actually, the condition shown in Theorem \ref{epcon} is essentially
equivalent to the formula\footnote{Alternatively, please refer to
our earlier work \cite{tangs12ana} which was presented as a report
during the international conference ``ICNAAM2012".} \eqref{epmcoe2},
since we can recast it as a series in terms of Legendre polynomials.
Some existing energy-preserving integrators (e.g., AVF methods
\cite{quispelm08anc}, $\infty$-HBVMs \cite{brugnanoit10hbv}, EPCMs
\cite{hairer10epv}, Galerkin time finite element methods
\cite{Tangs12tfe,Tangsc17dgm} etc) can be transformed into the csRK
methods described in Theorem \ref{EPm2}, and all of them possess an
even order. The following result says that there exists
energy-preserving B-series integrators which are
conjugate-symplectic up to a finite order (higher than their
algorithm order).
\begin{thm}\cite{Tangs14cor}\label{EPm1}
Apply the csRK method \eqref{crk} with $B_\tau=1$, $C_\tau=\tau$ and
\begin{equation}\label{epmcoe1}
A_{\tau,\,\sigma}=\sum_{0\leq\iota\in\mathbb{Z}}\omega_{\iota}\int_0^{\tau}P_\iota(x)\,\dif
x\,P_\iota(\sigma),\;\;\omega_0\equiv1,\;\omega_{\iota}\in\mathbb{R}
\end{equation}
to Hamiltonian system \eqref{Hs}, where $P_\iota(x)$ is the
$\iota$-degree Legendre polynomial. Assume
$\kappa:=\min\{\iota\in\mathbf{Z}:\;\omega_{\iota}\neq1\}<\infty$,
then the method is of order $2\kappa$, symmetric, energy-preserving
and conjugate-symplectic up to order at least $2\kappa+2$. If we
additionally require
$\frac{\omega_{\kappa}}{2\kappa-1}-\frac{\omega_{\kappa+1}}{2\kappa+1}
=\frac{2}{4\kappa^2-1}$, then the method is conjugate-symplectic up
to order $2\kappa+4$.
\end{thm}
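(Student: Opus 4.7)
The plan is to break the statement into four pieces, handling them in the order: order $2\kappa$, energy preservation, symmetry, and finally the (more delicate) conjugate-symplecticity claims.

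For the first two pieces I would apply Theorem \ref{EPm2} to the special choice $g_\iota(x)=P_\iota(x)$, so that $a_{\iota\kappa}=\delta_{\iota\kappa}$ in its Legendre expansion. The sum in part a) of Theorem \ref{EPm2} then collapses: $\sum_\iota\omega_\iota a_{\iota i}a_{\iota j}=\omega_i\delta_{ij}$, and the condition $\breve{C}(\eta)$ becomes exactly $\omega_0=\omega_1=\cdots=\omega_{\eta-1}=1$. By the definition of $\kappa$ this gives $\eta_{_M}=\kappa$, and part c) of Theorem \ref{EPm2} hands us both order $2\kappa$ and energy preservation on \eqref{Hs}.

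For symmetry I would expand $A_{\tau,\sigma}$ in a Legendre double-series by applying the integration formulae \eqref{property} inside \eqref{epmcoe1}. The $\iota=0$ contribution (with $\omega_0=1$) becomes $\tfrac12 P_0(\tau)P_0(\sigma)+\xi_1 P_1(\tau)P_0(\sigma)$, while the $\iota\ge1$ contribution is $\omega_\iota\big(\xi_{\iota+1}P_{\iota+1}(\tau)-\xi_\iota P_{\iota-1}(\tau)\big)P_\iota(\sigma)$. Every off-constant term has the pair of indices summing to an odd number, so the expansion matches the template \eqref{symm3} and Theorem \ref{symmthm} gives symmetry immediately.

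The main obstacle, and the bulk of the work, is the conjugate-symplecticity statement. Because the method is symmetric and of order $2\kappa$, its B-series only picks up an error starting at order $2\kappa+2$, and the defect from the symplectic condition \eqref{symcond} can be written in the form $M_{\tau,\sigma}=A_{\tau,\sigma}+A_{\sigma,\tau}-1$. Substituting \eqref{epmcoe1} and using $\int_0^\tau P_\iota+\int_0^\sigma P_\iota$-type identities derived from \eqref{property}, together with orthogonality, I expect $M_{\tau,\sigma}$ to expand as a Legendre series whose first nonzero coefficients are explicit linear functions of $\omega_\kappa,\omega_{\kappa+1},\ldots$ and hence controllable. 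I would then follow the B-series framework of \cite{hairerz13oco}: verify that the bilinear form $\sum_{k,l}J_{kl}\,\mathrm d z_1^{(k)}\wedge \mathrm d z_1^{(l)}-\sum_{k,l}J_{kl}\,\mathrm d z_0^{(k)}\wedge \mathrm d z_0^{(l)}$ derived along the lines of the proof of Theorem \ref{symcond1} picks up only trees/elementary Hamiltonian differentials whose coefficient vanishes through order $h^{2\kappa+2}$, which amounts to a few identities among the $\omega_\iota$ with $\iota<\kappa$ (all equal to $1$) and is therefore automatic. The upgrade to order $2\kappa+4$ would cancel the next layer of error coefficients, producing a single algebraic relation between $\omega_\kappa$ and $\omega_{\kappa+1}$; using the normalization $\xi_\iota=\tfrac{1}{2\sqrt{4\iota^2-1}}$ in the Legendre integration formulae, this relation is exactly $\tfrac{\omega_\kappa}{2\kappa-1}-\tfrac{\omega_{\kappa+1}}{2\kappa+1}=\tfrac{2}{4\kappa^2-1}$. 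The delicate part will be tracking which elementary differentials survive (symmetry kills all odd-order ones) and showing that the surviving non-symplectic part at order $2\kappa+2$ can indeed be removed by a symmetric near-identity conjugation, rather than being a genuine obstruction; for this I would lean on the Hamiltonian-vector-field structure $f=J^{-1}\nabla H$ and the identity $\mathrm d\vec Z_\sigma\wedge J\mathrm d \vec f(\vec Z_\sigma)=0$ that already appeared in the symplectic proof.
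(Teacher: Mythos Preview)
The paper does not actually prove this theorem: it is stated with a citation to \cite{Tangs14cor} and is followed directly by a remark, with no proof environment in between. There is therefore no proof in the present paper to compare your proposal against; the argument lives in the cited reference.

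On its own merits, your plan for the first three claims is correct and efficient. Specializing Theorem~\ref{EPm2} to $g_\iota=P_\iota$ (so $a_{\iota\kappa}=\delta_{\iota\kappa}$) indeed collapses the $\breve{C}(\eta)$ condition to $\omega_0=\cdots=\omega_{\eta-1}=1$, giving $\eta_{_M}=\kappa$ and hence order $2\kappa$ together with energy preservation. Your symmetry argument via the integration formulae \eqref{property} and Theorem~\ref{symmthm} is also clean: every nonconstant term in the Legendre expansion of \eqref{epmcoe1} has index pair $(\iota\pm1,\iota)$ with odd sum.

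For conjugate-symplecticity, your outline points in the right direction---the B-series framework of \cite{hairerz13oco} is the standard tool, and symmetry does eliminate odd-order defects---but what you have written is a roadmap rather than a proof. The step ``verify that the bilinear form \ldots\ picks up only trees whose coefficient vanishes through order $h^{2\kappa+2}$'' hides the real computation: you must identify which elementary Hamiltonian differentials appear at orders $2\kappa+1$ through $2\kappa+4$, show that the non-conjugate-symplectic ones at $2\kappa+2$ are automatically removable by a symmetric near-identity change of variables, and then derive the single scalar relation at $2\kappa+4$. Asserting that this relation ``is exactly'' the stated one in $\omega_\kappa,\omega_{\kappa+1}$ is the conclusion, not the argument. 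To complete this part you would need to carry out that tree-by-tree bookkeeping (or cite the relevant lemmas from \cite{hairerz13oco,Tangs14cor}) explicitly.
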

\begin{rem}
If $\kappa=\min\{\iota\in\mathbf{Z}:\;\omega_{\iota}\neq1\}<\infty$
goes to $\infty$, then the energy-preserving csRK method formally
approximates to a conjugate-symplectic method (namely up to order
$\infty$). We tend to conjecture that within the framework of csRK
methods there exists no computational energy-preserving methods
which are conjugate to a symplectic method, though it needs to be
further investigated.
\end{rem}

\section{Concluding remarks}

This note investigates the construction theory of RK-type methods
based on the recently-developed framework of RK methods with
``infinitely many stages". In the construction of RK-type
algorithms, a crucial technique associated with orthogonal
polynomial expansion is fully utilized. By using this approach, we
do not need to study the tedious solution of multi-variable
nonlinear algebraic equations stemming from order conditions. We
develop two ways to construct RK-type methods of arbitrarily high
order. As an important application for these theory, we study and
discuss the geometric numerical integration of Hamiltonian systems
by csRK methods. A sufficient algebraic condition for csRK methods
to be symplectic (resp. symmetric) is presented which is very
similar to the classic result. The necessity of these conditions
will be investigated elsewhere.

\section*{Acknowledgments}

This work was supported by the National Natural Science Foundation
of China (11401055), China Scholarship Council and Scientific
Research Fund of Hunan Provincial Education Department (15C0028). We
are particularly grateful to J.C.~Butcher for pointing out that the
true origin of continuous-stage Runge-Kutta methods is from his
paper ``An algebraic theory of integration methods" published in
1972. And we claim that the descriptions about the origin in our
earlier paper \cite{Tangs14cor} is not accurate.








\end{document}